\providecommand{\U}[1]{\protect\rule{.1in}{.1in}}
\newtheorem{theorem}{Theorem}[section]
\newtheorem{definition}[theorem]{Definition}
\newtheorem{lemma}[theorem]{Lemma}
\newtheorem{proposition}[theorem]{Proposition}
\newtheorem{remark}[theorem]{Remark}
\newenvironment{proof}[1][Proof]{\noindent \textbf{#1.} }{\  $\Box$}
\numberwithin{equation}{section}
\begin{document}

\title{Girsanov theorem for $G$-Brownian motion: the degenerate case}
\author{Guomin Liu \thanks{School of Mathematical Sciences, Fudan University, Shanghai 200433, People's Republic of China}
\\[7pt] gmliusdu@163.com }
\date{}
\maketitle

\textbf{Abstract}. In this paper, we prove the Girsanov theorem for
$G$-Brownian motion without the non-degenerate condition. The proof is based
on the perturbation method in the nonlinear setting by constructing a product
space of the $G$-expectation space and a linear space that contains a standard
Brownian motion. The estimates for exponential martingales of $G$-Brownian
motion are important for our arguments.

{\textbf{Key words:} $G$-expectation, $G$-Brownian motion, Girsanov theorem}

\textbf{AMS 2010 subject classifications:} 60H10, 60H30 \addcontentsline{toc}{section}{\hspace*{1.8em}Abstract}

\section{Introduction}

Motivated by financial problems with model uncertainty, Peng
\cite{peng2005,peng2008,Peng 1} systematically introduced the nonlinear
$G$-expectation theory. Under the $G$-expectation framework, the $G$-Brownian
motion and related It\^{o}'s stochastic calculus were constructed. Moreover,
the existence and uniqueness theorem of (forward and backward) stochastic
differential equations driven by $G$-Brownian motion were obtained in Gao
\cite{Gao}, Peng \cite{Peng 1} and Hu, Ji, Peng and Song \cite{HJPS}.

$G$-Brownian motion $B=(B_{t})_{t\geq0}$ is a continuous process with
independent and stationary increments under $G$-expectation $\hat{\mathbb{E}}%
$. It is characterized by a function $G(A)=\hat{\mathbb{E}}[\langle
AB_{1},B_{1}\rangle],$ for $A\in\mathbb{S}(d)$, where $\langle\cdot
,\cdot\rangle$ is the inner product for vectors and $\mathbb{S}(d)$ is the
sets of symmetric $d\times d$ matrices. We say that the function $G$ (or
$G$-Brownian motion $B$) is non-degenerate if there exist a constant
$\underline{\sigma}^{2}>0$ such that
\begin{equation}
G(A)-G(A^{\prime})\geq\frac{1}{2}\underline{\sigma}^{2}\text{tr}[A-A^{\prime
}],\text{ for }A\geq A^{\prime}. \label{Myeq1.1}%
\end{equation}
Under this non-degenerate condition, Osuka \cite{Osuka} and Xu, Shang and
Zhang \cite{XSZ} proved the Girsanov theorem for $G$-Brownian motion. Their
arguments used the so-called PDE method which usually applies Taylor's
expansion or It\^{o}'s formula to the solutions of $G$-heat equations that
corresponding to $G$-Brownian motion. So this method relies heavily on the
non-degenerate condition since the later guarantees the regularity of the solutions.

The aim of this paper is to generalize the Girsanov theorem to the case that
the non-degenerate condition (\ref{Myeq1.1}) for $B$ may not hold. Using the
product space theory in the nonlinear expectation setting, we obtain a
non-degenerate $G$-Brownian motion perturbation by adding a small linear
Brownian motion term to $G$-Brownian motion $B$. Then the Girsanov theorem for
$G$-Brownian motion under the non-degenerate condition applies. To get the
results for $B$, we consider a limit procedure, and the main difficulty is
that the dominated convergence theorem does not hold under the nonlinear
framework. We overcome this problem by utilizing the exponential martingale
property of $G$-Brownian motion and proving some useful estimates.

The paper is organized as follows. In Section 2, we recall some basic notions
and results of $G$-expectation, $G$-Brownian motion and Girsanov theorem for
$G$-Brownian motion in the non-degenerate case. In Section 3, we give the main
results on Girsanov theorem for possibly degenerate $G$-Brownian motion.

\section{Preliminaries}

In this section, we review some basic notions and results of $G$-expectation,
$G$-Brownian motion and the corresponding Girsanov theorem. More relevant
details can be found in \cite{Osuka,peng2005,peng2008,Peng 1,XSZ}.

\subsection{$G$-expectation space}

Let $\Omega$ be a given nonempty set and $\mathcal{H}$ be a linear space of
real-valued functions on $\Omega$ such that if $X_{1}$,$\dots$,$X_{d}%
\in\mathcal{H}$, then $\varphi(X_{1},X_{2},\dots,X_{d})\in\mathcal{H}$ for
each $\varphi\in C_{b.Lip}(\mathbb{R}^{d})$, where $C_{b.Lip}(\mathbb{R}^{d})$
is the space of bounded, Lipschitz functions on $\mathbb{R}^{d}$.
$\mathcal{H}$ is considered as the space of random variables.

\begin{definition}
A sublinear expectation $\hat{\mathbb{E}}$ on $\mathcal{H}$ is a functional
$\mathbb{\hat{E}}:\mathcal{H}\rightarrow\mathbb{R}$ satisfying the following
properties: for each $X,Y\in\mathcal{H}$,

\begin{description}
\item[\textrm{(i)}] {Monotonicity:}\quad$\mathbb{\hat{E}}[X]\geq
\mathbb{\hat{E}}[Y]\ \ \text{if}\ X\geq Y$;

\item[\textrm{(ii)}] {Constant preserving:}\quad$\mathbb{\hat{E}%
}[c]=c\ \ \ \text{for}\ c\in\mathbb{R}$;

\item[\textrm{(iii)}] {Sub-additivity:}\quad$\mathbb{\hat{E}}[X+Y]\leq
\mathbb{\hat{E}}[X]+\mathbb{\hat{E}}[Y]$;

\item[\textrm{(iv)}] {Positive homogeneity:}\quad$\mathbb{\hat{E}}[\lambda
X]=\lambda\mathbb{\hat{E}}[X]\ \ \ \text{for}\ \lambda\geq0$.
\end{description}

The triple $(\Omega,\mathcal{H},\mathbb{\hat{E}})$ is called a sublinear
expectation space.
\end{definition}

\begin{definition}
Let $(\Omega,\mathcal{H},\mathbb{\hat{E}})$ be a sublinear expectation space.
A $d$-dimensional random vector $Y$ is said to be independent from another
$m$-dimensional random vector $X$ under $\mathbb{\hat{E}}[\cdot]$ if, for each
test function $\varphi\in C_{b.Lip}(\mathbb{R}^{m+d})$, we have
\[
\mathbb{\hat{E}}[\varphi(X,Y)]=\mathbb{\hat{E}}[\mathbb{\hat{E}}
[\varphi(x,Y)]_{x=X}].
\]

\end{definition}

A family of $d$-dimensional random vectors $(X_{t})_{t\geq0}$ on the same
sublinear expectation space $(\Omega,\mathcal{H},\mathbb{\hat{E}})$ is called
a $d$-dimensional stochastic process.

\begin{definition}
Two $d$-dimensional processes $(X_{t})_{t\geq0}$ and $(Y_{t})_{t\geq0}$
defined respectively on sublinear expectation spaces $(\Omega_{1}
,\mathcal{H}_{1},\mathbb{\hat{E}}_{1})$ and $(\Omega_{2},\mathcal{H}
_{2},\mathbb{\hat{E}}_{2})$ are called identically distributed, denoted by
$(X_{t})_{t\geq0}\overset{d}{=}(Y_{t})_{t\geq0}$, if for each $n\in\mathbb{N}
$, $0\leq t_{1}<\cdots<t_{n}$, $(X_{t_{1}},\ldots,X_{t_{n}})\overset{d}{=}%
(Y_{t_{1}},\ldots,Y_{t_{n}})$, i.e.,
\[
\mathbb{\hat{E}}_{1}[\varphi(X_{t_{1}},\ldots,X_{t_{n}})]=\mathbb{\hat{E}}
_{2}[\varphi(Y_{t_{1}},\ldots,Y_{t_{n}})]\text{ for each }\varphi\in
C_{b.Lip}(\mathbb{R}^{n\times d}).
\]

\end{definition}

\begin{definition}
A $d$-dimensional process $(X_{t})_{t\geq0}$ on a sublinear expectation space
$(\Omega,\mathcal{H},\mathbb{\hat{E}})$ is said to have independent increments
if, for each $0\leq t_{1}<\cdots<t_{n}$, $X_{t_{n}}-X_{t_{n-1}}$ is
independent from $(X_{t_{1}},\ldots,X_{t_{n-1}})$. A $d$-dimensional process
$(X_{t})_{t\geq0}$ is said to have stationary increments if, for each $t$,
$s\geq0$, $X_{t+s}-X_{s}\overset{d}{=}X_{t}$.
\end{definition}

\begin{definition}
A $d$-dimensional process $(B_{t})_{t\geq0}$ on $(\Omega,\mathcal{H}%
,\hat{\mathbb{E}})$ is called a $G$-Brownian motion if the following
properties are satisfied:

\begin{description}
\item[(1)] $B_{0}=0$;

\item[(2)] It is a process with stationary and independent increments;

\item[(3)] For each $t\geq0$, $\mathbb{\hat{E}}[\varphi(B_{t})]=u^{\varphi
}(t,0)$ for each $\varphi\in C_{b.Lip}(\mathbb{R}^{d})$, where $u^{\varphi}$
is the viscosity solution of the following $G$-heat equation:
\[
\left\{
\begin{array}
[c]{l}%
\partial_{t}u(t,x)-G(D_{xx}^{2}u(t,x))=0,\\
u(0,x)=\varphi(x).
\end{array}
\right.
\]
Here $G(A):=\hat{\mathbb{E}}[\langle AB_{1},B_{1}\rangle],$ for $A\in
\mathbb{S}(d)$, where $\langle\cdot,\cdot\rangle$ is the inner product for
vectors and $\mathbb{S}(d)$ is the sets of symmetric $d\times d$ matrices.
\end{description}
\end{definition}

\begin{remark}
\upshape{If $G(A)=\frac{1}{2}\text{tr}[A]$, for $A\in
	\mathbb{S}(d)$,  then $B$ is a  standard Brownian motion.}
\end{remark}

Now we recall the construction of $G$-Brownian motion on the path space. We
denote by $\Omega:=C_{0}([0,\infty);\mathbb{R}^{d})$ the space of all
$\mathbb{R}^{d}$-valued continuous paths $(\omega_{t})_{t\geq0}$ started from
the origin and equipped with the distance
\[
\rho(\omega^{1}, \omega^{2}):=\sum^{\infty}_{N=1} 2^{-N} [(\max_{t\in[0,N]} |
\omega^{1}_{t}-\omega^{2}_{t}|) \wedge1].
\]

Let $B_{t}(\omega):=\omega_{t}$ for $\omega\in\Omega$, $t\geq0$ be the
canonical process. We set
\[
L_{ip}(\Omega_{T}):=\left\{  \varphi(B_{t_{1}},B_{t_{2}}-B_{t_{1}}%
\cdots,B_{t_{n}}-B_{t_{n-1}}):n\in\mathbb{N},0\leq t_{1}<t_{2}\cdots<t_{n}\leq
T,\varphi\in C_{b.Lip}(\mathbb{R}^{d\times n})\right\}
\]
as well as
\begin{equation}
L_{ip}(\Omega):=\bigcup_{m=1}^{\infty}L_{ip}(\Omega_{m}).
\label{9237257894334}%
\end{equation}

Let $G:\mathbb{S}(d)\rightarrow\mathbb{R}$ be a monotonic and sublinear
function. We define the $G$-expectation $\mathbb{\hat{E}} :L_{ip}%
(\Omega)\rightarrow\mathbb{R}$ by two steps.

Step 1. For $X=\varphi(B_{t+s}-B_{s})$ with $t$, $s\geq0$ and $\varphi\in
C_{b.Lip}(\mathbb{R}^{d})$, we define%
\[
\mathbb{\hat{E}}[X]=u(t,0),
\]
where $u$ is the solution of the following ${G}$-heat equation:%
\[
\partial_{t}u-{G}(D_{xx}^{2}u)=0,\ u(0,x)=\varphi(x).
\]

Step 2. For $X=\varphi(B_{t_{1}}-B_{t_{0}},B_{t_{2}}-B_{t_{1}},\cdots
,B_{t_{n}}-B_{t_{n-1}})$ with $0\leq t_{0}<\cdots<t_{n}$ and $\varphi\in
C_{b.Lip}(\mathbb{R}^{d\times n})$, we define%
\[
\mathbb{\hat{E}}[X]=\varphi_{n},
\]
where $\varphi_{n}$ is obtained via the following procedure:%
\[%
\begin{array}
[c]{rcl}%
\varphi_{1}(x_{1},\cdots,x_{n-1}) & = & \mathbb{\hat{E}}[\varphi(x_{1}%
,\cdots,x_{n-1},B_{t_{n}}-B_{t_{n-1}})],\\
\varphi_{2}(x_{1},\cdots,x_{n-2}) & = & \mathbb{\hat{E}}[\varphi_{1}%
(x_{1},\cdots,x_{n-2},B_{t_{n-1}}-B_{t_{n-2}})],\\
& \vdots & \\
\varphi_{n} & = & \mathbb{\hat{E}}[\varphi_{n-1}(B_{t_{1}}-B_{t_{0}})].
\end{array}
\]
The corresponding conditional expectation $\mathbb{\hat{E}}_{t}$ of $X$ with
$t=t_{i}$ is defined by%
\[
\mathbb{\hat{E}}_{t_{i}}[X]=\varphi_{n-i}(B_{t_{1}}-B_{t_{0}},\cdots,B_{t_{i}%
}-B_{t_{i-1}}).
\]

For each $p\geq1$, we denote by $L_{G}^{p}(\Omega_{t})$ the completion of
$L_{ip}(\Omega_{t})$ under the norm $||X||_{p}:=(\hat{\mathbb{E}}%
[|X|^{p}])^{1/p}$. The $G$-expectation $\hat{\mathbb{E}}[\cdot]$ and
conditional $G$-expectation $\hat{\mathbb{E}}_{t}[\cdot]$ can be extended
continuously to $L_{G}^{1}(\Omega)$ and $(\Omega,L_{G}^{p}(\Omega
),\hat{\mathbb{E}})$ forms a sublinear expectation space. Moreover, it is easy
to check that the canonical process $B$ is a $G$-Brownian motion on
$(\Omega,L_{G}^{p}(\Omega),\hat{\mathbb{E}})$ and $G(A)=\hat{\mathbb{E}%
}[\langle AB_{1},B_{1}\rangle]$ for $A\in\mathbb{S}(d)$.

Indeed, the $G$-expectation can be regarded as an upper expectation on
$L_{G}^{1}(\Omega)$.

\begin{theorem}
[\cite{D-H-P,H-P}]\label{the1.1} There exists a weakly compact set
$\mathcal{P}$ of probability measures on $(\Omega,\mathcal{B}(\Omega))$ such
that
\[
\hat{\mathbb{E}}[\xi]=\sup_{P\in\mathcal{P}}E_{P}[\xi],\ \ \ \ \text{for
all}\ \xi\in{L}_{G}^{1}{(\Omega)}.
\]

\end{theorem}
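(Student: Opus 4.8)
The plan is to invoke the general representation theorem for sublinear expectations (the Daniell--Stone / functional-analytic result that an upper expectation on an appropriate function space is realized as a supremum of linear expectations) and then verify the topological regularity of the resulting family of probability measures on the path space.

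First I would check that $\hat{\mathbb{E}}$ restricted to $L_{ip}(\Omega)$ is a sublinear functional that is moreover \emph{continuous from above at $0$}, i.e. if $X_n\downarrow 0$ pointwise with $X_n\in C_b(\Omega)$ then $\hat{\mathbb{E}}[X_n]\downarrow 0$. This is the nontrivial analytic input: it follows from the continuity properties of the viscosity solutions of the $G$-heat equation together with a tightness argument on $\Omega$, or one cites Denis--Hu--Peng \cite{D-H-P} where exactly this is established for the $G$-expectation. Once this regularity is in hand, the Riesz/Daniell representation gives, for each fixed cylinder structure, a linear expectation dominated by $\hat{\mathbb{E}}$, and the sublinearity together with the Hahn--Banach separation argument produces a whole family $\mathcal{P}_0$ of finitely additive linear functionals on $L_{ip}(\Omega)$ with $\hat{\mathbb{E}}[\xi]=\sup_{\mu\in\mathcal{P}_0}\mu(\xi)$ for $\xi\in L_{ip}(\Omega)$. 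The continuity-from-above then upgrades each $\mu$ to a countably additive Borel probability measure on $(\Omega,\mathcal{B}(\Omega))$ by the Daniell--Stone extension, and a standard monotone-class / density argument extends the identity from $L_{ip}(\Omega)$ to all of $L_G^1(\Omega)$, using that $L_{ip}(\Omega)$ is $\|\cdot\|_1$-dense in $L_G^1(\Omega)$ and that each $E_P$ as well as $\hat{\mathbb{E}}$ is $1$-Lipschitz in $\|\cdot\|_1$.

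Next I would establish weak compactness of the set $\mathcal{P}$ (one may take $\mathcal{P}$ to be the weak closure of $\mathcal{P}_0$, or its closed convex hull, without changing the supremum over bounded continuous $\xi$). By Prokhorov's theorem it suffices to prove tightness of $\mathcal{P}$ in $\Omega=C_0([0,\infty);\mathbb{R}^d)$. This is where the moment estimates for $G$-Brownian motion enter: one shows a uniform Kolmogorov-type bound $\hat{\mathbb{E}}[|B_t-B_s|^{2m}]\le C_m|t-s|^m$, valid for all $m$ because the $G$-normal distribution has finite moments of all orders, and hence $\sup_{P\in\mathcal{P}}E_P[|B_t-B_s|^{2m}]\le C_m|t-s|^m$; the Kolmogorov--Chentsov continuity criterion then yields a common modulus-of-continuity estimate and tightness on each $C([0,N];\mathbb{R}^d)$, which assemble into tightness on $\Omega$. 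Weak closedness is automatic once $\mathcal{P}$ is defined as a closure, and one checks that taking the closure does not enlarge $\sup_{P\in\mathcal P}E_P[\xi]$ for $\xi\in C_{b}(\Omega)$, hence not for $\xi\in L_G^1(\Omega)$ by density.

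The main obstacle is the first step: proving that $\hat{\mathbb{E}}$ is continuous from above on $C_b(\Omega)$ (equivalently, the tightness/regularity that makes the abstract representation produce genuine $\sigma$-additive measures). The sublinearity, monotonicity and constant-preserving properties are immediate from the definition of $\hat{\mathbb{E}}$ via the $G$-heat equation, but downward continuity is a real limit theorem that relies on the specific structure of the $G$-heat flow and the path space; in the source this is precisely the content of \cite{D-H-P}, so in a self-contained treatment one would either reproduce that argument or simply cite it, as the statement of the theorem already does. Everything after that — the Daniell--Stone extension, the monotone-class passage to $L_G^1(\Omega)$, and the Prokhorov tightness argument — is routine given the moment bounds for $G$-Brownian motion recalled in the preliminaries.
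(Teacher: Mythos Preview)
The paper does not give its own proof of this statement: Theorem~\ref{the1.1} is quoted from \cite{D-H-P,H-P} as a preliminary result and is used without argument. So there is nothing in the paper to compare your proposal against.

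That said, your outline is essentially the strategy of \cite{D-H-P}: Hahn--Banach gives a family of dominated linear functionals, a regularity property (continuity from above / tightness) upgrades these to $\sigma$-additive probabilities, and Prokhorov yields weak compactness. You correctly identify that the only substantive analytic step is the downward continuity of $\hat{\mathbb{E}}$ on $C_b(\Omega)$, and you are right that this is exactly what \cite{D-H-P} supplies. One remark: in the actual proof in \cite{D-H-P,H-P} the tightness is not obtained via Kolmogorov moment bounds as you suggest, but rather by constructing $\mathcal{P}$ directly as the set of laws of stochastic integrals $\int_0^{\cdot}\theta_s\,dW_s$ with $\theta$ ranging over a bounded set determined by $\Gamma$ (the set in the representation $G(A)=\tfrac12\sup_{\gamma\in\Gamma}\mathrm{tr}[\gamma A]$), from which tightness and weak compactness follow immediately. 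Your Kolmogorov route would also work, but it is a slightly different packaging of the same estimates.
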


For this $\mathcal{P}$, we define the following capacity%
\[
c(A):=\sup_{P\in\mathcal{P}}P(A),\ A\in\mathcal{B}(\Omega).
\]
A set $A\subset\mathcal{B}(\Omega)$ is polar if $c(A)=0$. A property holds
\textquotedblleft$quasi$-$surely$\textquotedblright\ (q.s.) if it holds
outside a polar set. In the following, we do not distinguish two random
variables $X$ and $Y$ if $X=Y$ q.s.

We set
\[
\mathcal{L}(\Omega):=\{X\in\mathcal{B}(\Omega):E_{P}[X]\ \text{exists for each
}\ P\in\mathcal{P}\}.
\]
Then the $G$-expectation can be extended to the space $\mathcal{L}(\Omega)$
and we still denote it by $\hat{\mathbb{E}}$, i.e.,
\[
\hat{\mathbb{E}}[X]:=\sup_{P\in\mathcal{P}}E_{P}[X],\ \ \ \ \text{for
each}\ X\in\mathcal{L}(\Omega).
\]

\begin{definition}
A real function $X$ on $\Omega$ is said to be quasi-continuous if for each
$\varepsilon>0$, there exists an open set $O$ with $c(O)<\varepsilon$ such
that $X|_{O^{c}}$ is continuous.
\end{definition}

\begin{definition}
We say that $X:\Omega\mapsto\mathbb{R}$ has a quasi-continuous version if
there exists a quasi-continuous function $Y:\Omega\mapsto\mathbb{R}$ such that
$X=Y$, q.s.
\end{definition}

Then we have the following characterization of the space $L_{G}^{p}(\Omega)$,
which can be seen as a counterpart of Lusin's theorem in the nonlinear
expectation theory.

\begin{theorem}
[\cite{D-H-P}]\label{LG-ch}For each $p\geq1$, we have
\[
L_{G}^{p}(\Omega)=\{X\in\mathcal{B}(\Omega)\ :\ \ \lim\limits_{N\rightarrow
\infty}\mathbb{\hat{E}}[|X|^{p}I_{\{|X|\geq N\}}]=0\ \text{and}\ X\ \text{has
a quasi-continuous version}\}.
\]

\end{theorem}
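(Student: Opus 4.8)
The plan is to prove the two inclusions separately, using the representation $\hat{\mathbb{E}}[\cdot]=\sup_{P\in\mathcal{P}}E_P[\cdot]$ of Theorem~\ref{the1.1}, the (outer) regularity of the capacity $c$ it induces, and the Tietze extension theorem on the metric space $(\Omega,\rho)$. For ``$\subseteq$'': every $\xi\in L_{ip}(\Omega)$ is bounded and continuous on $\Omega$, hence quasi-continuous and a fortiori in the right-hand side. Given $X\in L_G^p(\Omega)$, choose $\xi_n\in L_{ip}(\Omega)$ with $\hat{\mathbb{E}}[|\xi_n-X|^p]\le 4^{-np}$. The Chebyshev-type bound $c(|\xi_n-X|>2^{-n})\le 2^{np}\hat{\mathbb{E}}[|\xi_n-X|^p]\le 2^{-np}$ together with countable subadditivity of $c$ shows that $A_m:=\bigcup_{n\ge m}\{|\xi_n-X|>2^{-n}\}$ has $c(A_m)\to 0$ and $\xi_n\to X$ uniformly on $A_m^c$; enlarging $A_m$ to an open set of nearly the same capacity via outer regularity of $c$, one sees that $X$ coincides q.s. with a function that is continuous off an open set of arbitrarily small capacity, i.e. $X$ has a quasi-continuous version. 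The tail condition follows from $|X|^pI_{\{|X|\ge 2N\}}\le 2^p\big((|X|-N)^{+}\big)^{p}$ and the elementary remark that $(|X|-N)^{+}\le|X-\xi_n|$ once $N\ge\|\xi_n\|_\infty$ (note that $\xi_n$ is bounded), so that $\hat{\mathbb{E}}[|X|^pI_{\{|X|\ge 2N\}}]\le 2^p\hat{\mathbb{E}}[|X-\xi_n|^p]$ for all large $N$ and every $n$, which forces the (non-increasing) limit to be $0$.

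For ``$\supseteq$'' I would approximate in three stages. (i) Reduction to bounded $X$: given $X$ with the tail property, the truncations $X_N:=(-N)\vee X\wedge N$ are bounded, still possess a quasi-continuous version, and satisfy $\hat{\mathbb{E}}[|X-X_N|^p]=\hat{\mathbb{E}}\big[\big((|X|-N)^{+}\big)^{p}\big]\le\hat{\mathbb{E}}[|X|^pI_{\{|X|\ge N\}}]\to 0$. (ii) Reduction to bounded continuous functions on $\Omega$: if $\|X\|_\infty\le M$ and $X$ is quasi-continuous, pick for $\varepsilon>0$ an open $O$ with $c(O)<\varepsilon$ and $X|_{O^c}$ continuous, and extend $X|_{O^c}$ by Tietze's theorem to a bounded continuous $\tilde X:\Omega\to\mathbb{R}$ with $\|\tilde X\|_\infty\le M$; then $\hat{\mathbb{E}}[|X-\tilde X|^p]\le(2M)^pc(O)<(2M)^p\varepsilon$. (iii) Approximation of a bounded continuous $X$ on $\Omega$ by $L_{ip}(\Omega)$: since $\mathcal{P}$ is weakly compact it is tight, so for $\varepsilon>0$ there is a compact $K\subset\Omega$ with $c(K^c)<\varepsilon$; by Arzel\`a--Ascoli the paths in $K$ share a modulus of continuity on each $[0,m]$, so the maps $\pi_n$ sending $\omega$ to its dyadic piecewise-linear interpolation on $[0,n]$, frozen afterwards, satisfy $\sup_{\omega\in K}\rho(\pi_n\omega,\omega)\to 0$ and hence $\sup_{\omega\in K}|X(\pi_n\omega)-X(\omega)|\to 0$; consequently $\hat{\mathbb{E}}[|X\circ\pi_n-X|^p]\le\sup_{\omega\in K}|X(\pi_n\omega)-X(\omega)|^p+(2M)^pc(K^c)$, which can be made as small as we like. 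Since each $X\circ\pi_n$ is a bounded continuous function of finitely many increments of $B$, approximating that function uniformly on compact sets by a bounded Lipschitz one (and invoking tightness once more) gives $X\circ\pi_n\in L_G^p(\Omega)$, whence $X\in L_G^p(\Omega)$.

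The main obstacle is stage (iii): because there is no dominated convergence theorem under $\hat{\mathbb{E}}$, the pointwise convergence $X\circ\pi_n\to X$ does not by itself yield $\hat{\mathbb{E}}[|X\circ\pi_n-X|^p]\to 0$. This is precisely where Theorem~\ref{the1.1} enters: tightness of $\mathcal{P}$ furnishes compact subsets of $\Omega$ of capacity arbitrarily close to $1$, on which the interpolation approximations — and the subsequent passage from continuous functionals to cylinder functions — converge uniformly, so all the estimates can be closed. The remaining ingredients, the outer regularity of $c$ and the Tietze extension theorem on $(\Omega,\rho)$, are routine.
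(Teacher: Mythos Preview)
The paper does not supply its own proof of this theorem: it is quoted from \cite{D-H-P} (Denis--Hu--Peng) and stated without argument. Your sketch is essentially the original Denis--Hu--Peng proof---Chebyshev plus a capacitary Borel--Cantelli for the quasi-continuous version, truncation plus Tietze extension plus tightness-based cylinder approximation for the reverse inclusion---and the identification of stage~(iii) as the point where weak compactness of $\mathcal{P}$ is genuinely needed is accurate. One small clarification worth making explicit: in your interpolation maps $\pi_n$ the mesh must be refined with $n$ (e.g.\ dyadic of order $2^{-n}$ on $[0,n]$), not merely the time horizon, so that the Arzel\`a--Ascoli modulus on $K$ actually forces $\sup_{\omega\in K}\rho(\pi_n\omega,\omega)\to 0$; as written this is slightly ambiguous.
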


Note that the monotone convergence theorem is different from the classical
case due to the nonlinearity.

\begin{proposition}
\label{DCT} Suppose $X_{n}$, $n\geq1$ and $X$ are $\mathcal{B}(\Omega)$-measurable.

\begin{description}
\item[(1)] Assume $X_{n}\uparrow X$ q.s. and $E_{P}[X_{1}^{-}]<\infty$ for all
$P\in\mathcal{P}$. Then $\mathbb{\hat{E}}[X_{n}]\uparrow\mathbb{\hat{E}}[X]. $

\item[(2)] If $\{X_{n}\}_{n=1}^{\infty}$ in ${L}_{G}^{1}(\Omega)$ satisfies
that $X_{n}\downarrow X$, q.s., then $\mathbb{\hat{E}}[X_{n}]\downarrow
\mathbb{\hat{E}}[X].$
\end{description}
\end{proposition}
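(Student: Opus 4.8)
The plan is to treat the two parts separately, since they are of quite different natures. Part~(1) is, after some bookkeeping, the classical monotone convergence theorem applied along each $P\in\mathcal{P}$ followed by an interchange of two suprema, and it uses nothing specific to the nonlinear theory. Part~(2) is the genuinely nonlinear statement: there is no dominated convergence theorem for $\hat{\mathbb{E}}$, so one cannot pass to the limit directly in $\lim_n\sup_{P\in\mathcal{P}}E_P[X_n]$, and I would let the weak compactness of $\mathcal{P}$ (Theorem~\ref{the1.1}), together with the characterisation of $L_G^1(\Omega)$ (Theorem~\ref{LG-ch}), play the role that dominated convergence plays classically. In both parts $\hat{\mathbb{E}}$ is understood through its extension to $\mathcal{L}(\Omega)$, and the first thing to record is that the hypotheses put the relevant variables in $\mathcal{L}(\Omega)$: in~(1), $X_n\geq X_1$ and $E_P[X_1^-]<\infty$ give $E_P[X_n^-]\leq E_P[X_1^-]<\infty$; in~(2), $X_n\leq X_1\in L_G^1(\Omega)$ gives $E_P[X_n^+]\leq E_P[X_1^+]<\infty$; either way $E_P[X_n]$ and $E_P[X]$ exist for every $P\in\mathcal{P}$.

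For~(1): fix $P\in\mathcal{P}$. From $X_n\uparrow X$ q.s.\ one has $X_n-X_1\geq0$ and $X_n-X_1\uparrow X-X_1$ $P$-a.s., so the classical monotone convergence theorem, combined with $E_P[X_1^-]<\infty$, yields $E_P[X_n]\uparrow E_P[X]$ in $(-\infty,+\infty]$. Since $\hat{\mathbb{E}}[X_n]=\sup_{P\in\mathcal{P}}E_P[X_n]$ is nondecreasing in $n$ and suprema over a product index set commute,
\[
\lim_n\hat{\mathbb{E}}[X_n]=\sup_n\sup_{P\in\mathcal{P}}E_P[X_n]=\sup_{P\in\mathcal{P}}\sup_nE_P[X_n]=\sup_{P\in\mathcal{P}}E_P[X]=\hat{\mathbb{E}}[X],
\]
which is~(1).

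For~(2): by monotonicity $\hat{\mathbb{E}}[X_n]$ is nonincreasing, with $\hat{\mathbb{E}}[X_n]\geq\hat{\mathbb{E}}[X]$ (since $X\leq X_n$ q.s.) and $\hat{\mathbb{E}}[X_n]\leq\hat{\mathbb{E}}[X_1]<\infty$. Set $c:=\lim_n\hat{\mathbb{E}}[X_n]$; it suffices to show $c\leq\hat{\mathbb{E}}[X]$, so assume for contradiction $c>\hat{\mathbb{E}}[X]$, whence $c\in\mathbb{R}$. Using Theorem~\ref{the1.1}, choose $P_n\in\mathcal{P}$ with $E_{P_n}[X_n]>c-1/n$. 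Since $\Omega$ is a Polish space the weak topology on probability measures on $\Omega$ is metrisable, so the weakly compact set $\mathcal{P}$ is weakly sequentially compact, and along a subsequence $P_{n_k}\rightharpoonup P_0$ with $P_0\in\mathcal{P}$. For each fixed $m$ and all $n_k\geq m$ we have $X_m\geq X_{n_k}$ q.s., hence $E_{P_{n_k}}[X_m]\geq E_{P_{n_k}}[X_{n_k}]>c-1/n_k$.

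The crux is to pass to the limit in $k$ on the left, that is, to prove $E_{P_{n_k}}[X_m]\to E_{P_0}[X_m]$ even though $X_m$ is merely in $L_G^1(\Omega)$, not bounded and continuous. I would do this in two steps. First, truncate: $X_m^N:=(X_m\wedge N)\vee(-N)$ is bounded and, being the composition of the quasi-continuous version of $X_m$ with a continuous function, is bounded and quasi-continuous, hence $X_m^N\in L_G^1(\Omega)$ by Theorem~\ref{LG-ch}; moreover $\sup_{P\in\mathcal{P}}E_P[|X_m-X_m^N|]\leq\hat{\mathbb{E}}[|X_m|I_{\{|X_m|\geq N\}}]\to0$ as $N\to\infty$, again by Theorem~\ref{LG-ch}. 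Second, since $X_m^N\in L_G^1(\Omega)$ and $L_G^1(\Omega)$ is the completion of $L_{ip}(\Omega)$, for each $\eta>0$ there is $\xi\in L_{ip}(\Omega)$, in particular a bounded continuous function on $\Omega$, with $\hat{\mathbb{E}}[|X_m^N-\xi|]<\eta$; then $\sup_{P\in\mathcal{P}}|E_P[X_m^N]-E_P[\xi]|\leq\eta$ together with $E_{P_{n_k}}[\xi]\to E_{P_0}[\xi]$ gives $E_{P_{n_k}}[X_m^N]\to E_{P_0}[X_m^N]$, and letting $N\to\infty$ with the uniform-in-$k$ bound above gives $E_{P_{n_k}}[X_m]\to E_{P_0}[X_m]$. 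Hence $E_{P_0}[X_m]\geq c$ for every $m$. But under the single measure $P_0$ we have $X_m\downarrow X$ with $X_m\leq X_1\in L^1(P_0)$, so the classical (downward) monotone convergence theorem gives $E_{P_0}[X_m]\downarrow E_{P_0}[X]\leq\hat{\mathbb{E}}[X]<c$, a contradiction. Therefore $c\leq\hat{\mathbb{E}}[X]$, and with $c\geq\hat{\mathbb{E}}[X]$ we get $\hat{\mathbb{E}}[X_n]\downarrow\hat{\mathbb{E}}[X]$. I expect this middle step — upgrading the weak convergence from bounded continuous integrands to $L_G^1(\Omega)$-integrands via truncation and the $L_{ip}$-density built into $L_G^1(\Omega)$ — to be the only real obstacle; everything else is classical measure theory applied fibrewise once the weak limit $P_0$ has been extracted.
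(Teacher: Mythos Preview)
The paper does not supply a proof of this proposition; it is stated as a known fact from the $G$-expectation literature (it is essentially Theorem~28 and Lemma~29 of Denis--Hu--Peng \cite{D-H-P}, from which the paper imports Theorem~\ref{the1.1} and Theorem~\ref{LG-ch}). So there is no paper proof to compare against, and the question is simply whether your argument stands on its own.

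It does. Part~(1) is the standard two-supremum swap and is fine as written. Part~(2) is exactly the argument used in \cite{D-H-P}: weak compactness of $\mathcal{P}$ furnishes a limit measure $P_0$, and the key step---upgrading $E_{P_{n_k}}[\xi]\to E_{P_0}[\xi]$ from $\xi\in C_b(\Omega)$ to $\xi\in L_G^1(\Omega)$---goes through precisely via the truncation-plus-$L_{ip}$-density scheme you describe, since functions in $L_{ip}(\Omega)$ are bounded and continuous on the path space and the tail bound $\hat{\mathbb{E}}[|X_m|I_{\{|X_m|\geq N\}}]\to0$ from Theorem~\ref{LG-ch} is uniform in $P$. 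One cosmetic point: you might state explicitly that if $c=-\infty$ the conclusion is immediate (since $\hat{\mathbb{E}}[X]\leq c$ always), so that the reduction to $c\in\mathbb{R}$ is visibly exhaustive; your parenthetical ``whence $c\in\mathbb{R}$'' is correct but terse.
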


\begin{proposition}
[Jensen's inequality]Let $X\in\mathcal{L}(\Omega)$ and $\varphi:\mathbb{R}%
\rightarrow\mathbb{R}$ be a convex function. Assume that $\mathbb{\hat{E}%
}[|X|]<\infty$ and $\varphi(X)\in\mathcal{L}(\Omega)$. Then
\[
\mathbb{\hat{E}}[\varphi(X)]\geq\varphi(\mathbb{\hat{E}}[X]).
\]

\end{proposition}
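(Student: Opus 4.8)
The plan is to run the classical supporting-line argument for Jensen's inequality, but to be careful when pushing the resulting pointwise bound through $\hat{\mathbb{E}}$, since $\hat{\mathbb{E}}$ is only sublinear. First note that, because $X\in\mathcal{L}(\Omega)$ and $\hat{\mathbb{E}}[|X|]<\infty$, the number $c:=\hat{\mathbb{E}}[X]$ is a well-defined finite real (indeed $|E_P[X]|\leq\hat{\mathbb{E}}[|X|]<\infty$ for every $P\in\mathcal{P}$), so $\varphi(c)$ makes sense. Since $\varphi:\mathbb{R}\to\mathbb{R}$ is convex, it is continuous with one-sided derivatives everywhere, hence admits a supporting line at $c$: there exists $a\in\mathbb{R}$ (any slope in $[\varphi'_-(c),\varphi'_+(c)]$) such that $\varphi(y)\geq\varphi(c)+a(y-c)$ for all $y\in\mathbb{R}$. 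Substituting $y=X$ gives the inequality $\varphi(X)\geq\varphi(c)+a(X-c)$ pointwise (in particular q.s.).

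Next I would apply $\hat{\mathbb{E}}$ to both sides. If $\hat{\mathbb{E}}[\varphi(X)]=+\infty$ the claim is trivial, so assume it finite; the hypothesis $\varphi(X)\in\mathcal{L}(\Omega)$ guarantees $\hat{\mathbb{E}}[\varphi(X)]=\sup_{P\in\mathcal{P}}E_P[\varphi(X)]$ is well defined and that monotonicity may be applied $P$-wise. Using monotonicity and the translation invariance $\hat{\mathbb{E}}[b+Y]=b+\hat{\mathbb{E}}[Y]$ (a consequence of sub-additivity together with constant preservation), we get $\hat{\mathbb{E}}[\varphi(X)]\geq\varphi(c)-ac+\hat{\mathbb{E}}[aX]$, so it remains to show $\hat{\mathbb{E}}[aX]\geq ac$. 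If $a\geq0$, positive homogeneity yields $\hat{\mathbb{E}}[aX]=a\hat{\mathbb{E}}[X]=ac$. If $a<0$, write $\hat{\mathbb{E}}[aX]=(-a)\hat{\mathbb{E}}[-X]$; since $0=\hat{\mathbb{E}}[X+(-X)]\leq\hat{\mathbb{E}}[X]+\hat{\mathbb{E}}[-X]$ we have $\hat{\mathbb{E}}[-X]\geq-c$, whence $\hat{\mathbb{E}}[aX]\geq(-a)(-c)=ac$. In either case $\hat{\mathbb{E}}[\varphi(X)]\geq\varphi(c)-ac+ac=\varphi(c)=\varphi(\hat{\mathbb{E}}[X])$, which is the desired inequality.

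The only point requiring care — and thus the main (mild) obstacle — is precisely the step $\hat{\mathbb{E}}[aX]\geq ac$: unlike in the linear theory one cannot simply commute $\hat{\mathbb{E}}$ with the affine lower bound, and the sign of the supporting slope $a$ must be treated separately, invoking positive homogeneity when $a\geq0$ and the sub-additivity inequality $\hat{\mathbb{E}}[-X]\geq-\hat{\mathbb{E}}[X]$ when $a<0$. Everything else follows directly from properties (i)--(iv) of a sublinear expectation.
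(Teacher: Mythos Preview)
Your argument is correct, but it takes a different route from the paper. The paper exploits the representation $\hat{\mathbb{E}}[\cdot]=\sup_{P\in\mathcal{P}}E_P[\cdot]$ directly: pick $P_k\in\mathcal{P}$ with $E_{P_k}[X]\to\hat{\mathbb{E}}[X]$, apply the \emph{classical} Jensen inequality under each $P_k$, and then use continuity of $\varphi$ together with $E_{P_k}[\varphi(X)]\leq\hat{\mathbb{E}}[\varphi(X)]$ to conclude. By contrast, you run the supporting-line argument and push the affine lower bound through $\hat{\mathbb{E}}$ using only the sublinear axioms (i)--(iv), with a sign case for the slope $a$. The paper's proof is shorter and makes essential use of the upper-expectation structure already established in the preliminaries; your proof is more intrinsic, valid for any sublinear expectation satisfying (i)--(iv) without invoking the representation theorem, and nicely isolates where sublinearity (as opposed to linearity) enters, namely in the inequality $\hat{\mathbb{E}}[-X]\geq-\hat{\mathbb{E}}[X]$ needed when $a<0$.
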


\begin{proof}
We can take $P_{k}\in\mathcal{P}$ such that $E_{P_{k}}[X]\rightarrow
\mathbb{\hat{E}}[X].$ Note that convex function is continuous, then by the
classical Jensen's inequality,
\begin{align*}
\varphi(\mathbb{\hat{E}}[X])=\lim_{k\rightarrow\infty}\varphi(E_{P_{k}%
}[X])\leq\lim_{k\rightarrow\infty}E_{P_{k}}[\varphi(X)]\leq\mathbb{\hat{E}%
}[\varphi(X)].
\end{align*}

\end{proof}

\begin{remark}
\upshape{
Since $\mathbb{\hat{E}}$ is the upper-expectation, we cannot expect that
Jensen's inequality $\mathbb{\hat{E}}[\varphi(X)]\leq\varphi(\mathbb{\hat{E}
}[X])$\ holds for concave functions. For example, we take $d=1$,
$\varphi=-x,X=|B_{1}|^{2}$ with $-\hat{\mathbb{E}}[-|B_{1}|^{2}]<\hat
{\mathbb{E}}[|B_{1}|^{2}]$. Then
\[
\mathbb{\hat{E}}[-X]>-\hat{\mathbb{E}}[X].
\]
}
\end{remark}

For each $1\leq i,j\leq d$, we denote by $\langle B^{i},B^{j}\rangle$ the
mutual quadratic variation process. Then for two processes $\eta\in M_{G}%
^{2}(0,T)$ and $\xi\in M_{G}^{1}(0,T)$, the $G$-It\^{o} integrals $\int%
_{0}^{T}\eta_{t}dB_{t}^{i}$ and $\int_{0}^{T}\xi_{t}d\langle B^{i}%
,B^{j}\rangle_{t}$ are well defined. Moreover, we have $\int_{0}^{T}\eta
_{t}dB_{t}^{i}\in L_{G}^{2}(\Omega)$ and $\int_{0}^{T}\xi_{t}d\langle
B^{i},B^{j}\rangle_{t}\in L_{G}^{1}(\Omega)$.




\begin{definition}
A process $\{M_{t}\}$ with values in $L^{1}_{G}(\Omega)$ is called a
$G$-martingale if $M_{t}\in L^{1}_{G}(\Omega_{t})$ and $\hat{\mathbb{E}}%
_{s}(M_{t})=M_{s}$ for any $s\leq t$. If $\{M_{t}\}$ and $\{-M_{t}\}$ are both
$G$-martingales, we call $\{M_{t}\}$ a symmetric $G$-martingale.
\end{definition}

We say that the function $G$ is non-degenerate if there exists a constant
$\underline{\sigma}^{2}>0$ such that
\begin{equation}
G(A)-G(A^{\prime})\geq\frac{1}{2}\underline{\sigma}^{2}\text{tr}[A-A^{\prime
}],\text{ for }A\geq A^{\prime}. \label{Myeq2.1}%
\end{equation}

\begin{remark}
\label{Myre2.1}
\upshape{  By the Hahn-Banach theorem, one can check that there exists a
	bounded, convex and closed subset $\Gamma\subset\mathbb{S}_{+}(d)$ such that	\begin{equation}
	G(A)=\frac{1}{2}\sup_{\gamma\in\Gamma}\text{\textrm{tr}}[\gamma A],\ \ \text{
		for }A\in\mathbb{S}(d), \label{neweq-1}	\end{equation}
	where $\mathbb{S}_{+}(d)$ denotes the collection of nonnegative elements in
	$\mathbb{S}(d)$. Then
	 (\ref{Myeq2.1}) is equivalent to the condition that
	$$
	\gamma\geq \underline{\sigma}	^{2}I_{d\times d},\ \text{for each}\ \gamma\in\Gamma.
	$$
In the one-dimensional case, the non-degenerate condition reduces to the condition that the lower variance of $B$ is strictly positive, i.e.,
$-\hat{\mathbb{E}}[-|B_{1}|^{2}]>0$.	
}
\end{remark}

Now we give the Girsanov theorem under the non-degenerate condition. Given
$T>0$ and $h\in M_{G}^{2}(0,T;\mathbb{R}^{d})$. We define, for $0\leq t\leq
T$,%
\begin{align*}
&  \mathcal{E}(h)_{t}:=\exp\left(  \int_{0}^{t}\langle h_{s},dB_{s}%
\rangle-\frac{1}{2}\int_{0}^{t}\langle h_{s}h_{s}^{T},d\langle{B}\rangle
_{s}\rangle\right)  ,\\
&  \tilde{B}_{t}:=B_{t}-\int_{0}^{t}d\langle{B}\rangle_{s}h_{s},
\end{align*}
where $\langle\cdot,\cdot\rangle$ is the Euclid inner product for vectors and
matrices. We set
\[
\tilde{\mathcal{H}}:=\{\varphi(\tilde{B}_{t_{1}},\tilde{B}_{t_{2}}%
\cdots,\tilde{B}_{t_{n}}):n\in\mathbb{N},0\leq t_{1}<t_{2}\cdots<t_{n}\leq
T,\varphi\in C_{b.Lip}(\mathbb{R}^{n\times d})\}.
\]
We define a sublinear expectation $\tilde{\mathbb{E}}$ by
\[
\tilde{\mathbb{E}}[\xi]:=\hat{\mathbb{E}}[\xi\mathcal{E}(h)_{T}],\ \text{for
}\xi\in\ \tilde{\mathcal{H}}.
\]

We shall assume the following $G$-Novikov's condition:

\begin{itemize}
\item[($H$)] There exists some constant $\delta>0$ such that
\begin{equation}
{\mathbb{\hat{E}}}\left[  \exp\left(  \frac{1}{2}(1+\delta)\int_{0}^{T}\langle
h_{t}h_{t}^{T},d\langle{B}\rangle_{t}\rangle\right)  \right]  <\infty
.\label{eq1-1}%
\end{equation}

\end{itemize}

Girsanov theorem for $G$-Brownian motion is stated as follows.

\begin{theorem}
\label{Gt0} \cite{XSZ,Osuka} If $G$ is non-degenerate and $h$ satisfies the
$G$-Novikov's condition (H), then the process $(\tilde{B}_{t})_{0\leq t\leq
T}$ is a $G$-Brownian motion on the sublinear expectation space $(\Omega
,\tilde{\mathcal{H}},\tilde{\mathbb{E}})$.
\end{theorem}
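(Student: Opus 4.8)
The plan is to use the PDE (nonlinear Feynman--Kac) method, as in \cite{XSZ,Osuka}. The preliminary task is to check that $\tilde{\mathbb{E}}$ is a genuine sublinear expectation on $\tilde{\mathcal H}$. Using the $G$-Novikov condition (H) together with $L^{p}$-estimates for exponential $G$-martingales (the extra $\delta$ providing the H\"older room), one shows that $\{\mathcal{E}(h)_{t}\}_{0\le t\le T}$ is a \emph{symmetric} $G$-martingale with $\hat{\mathbb{E}}[\mathcal{E}(h)_{t}]=1$ and that $\xi\,\mathcal{E}(h)_{T}\in L_{G}^{1}(\Omega)$ for each $\xi\in\tilde{\mathcal H}$. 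Symmetry yields the consistency identity $\hat{\mathbb{E}}[\xi\,\mathcal{E}(h)_{T}]=\hat{\mathbb{E}}[\xi\,\mathcal{E}(h)_{t}]$ (and its conditional version $\hat{\mathbb{E}}_{r}[\xi\,\mathcal{E}(h)_{T}]=\hat{\mathbb{E}}_{r}[\xi\,\mathcal{E}(h)_{t}]$ for $r\le t$) for $\xi\in L_{G}^{1}(\Omega_{t})$, which is what makes $\tilde{\mathbb{E}}$ time-consistent. Monotonicity, sub-additivity and positive homogeneity of $\tilde{\mathbb{E}}$ are then inherited from $\hat{\mathbb{E}}$ because $\mathcal{E}(h)_{T}\ge0$, and constant preservation follows from $\hat{\mathbb{E}}[\mathcal{E}(h)_{T}]=1$ together with $\hat{\mathbb{E}}[-\mathcal{E}(h)_{T}]=-1$.

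The heart of the proof identifies the finite-dimensional distributions of $\tilde B$ under $\tilde{\mathbb{E}}$. Fix $\varphi\in C_{b.Lip}(\mathbb{R}^{d})$ and $0\le r<t\le T$, and let $v$ solve the backward $G$-heat equation $\partial_{s}v+G(D^{2}_{xx}v)=0$ on $[r,t]$ with $v(t,\cdot)=\varphi$, so that $v(r,0)=u^{\varphi}(t-r,0)$. Here the non-degeneracy condition (\ref{Myeq2.1}) is used decisively: it makes the equation uniformly parabolic, so by interior Schauder/Krylov estimates $v\in C^{1,2}$ (first for smooth $\varphi$, then in general by approximation). Applying the $G$-It\^o formula to $M_{s}:=v\bigl(s,\tilde B_{s}-\tilde B_{r}\bigr)\,\mathcal{E}(h)_{s}$ on $[r,t]$ and using $d\mathcal{E}(h)_{s}=\mathcal{E}(h)_{s}\langle h_{s},dB_{s}\rangle$, the finite-variation terms coming from the drift $-\int d\langle B\rangle\,h$ in $d\tilde B$ cancel exactly against the cross-variation term $d\langle v(\cdot,\tilde B-\tilde B_{r}),\mathcal{E}(h)\rangle$, leaving
\[
dM_{s}=\mathcal{E}(h)_{s}\Bigl(\tfrac{1}{2}\text{tr}\bigl[D^{2}_{xx}v\,d\langle B\rangle_{s}\bigr]-G(D^{2}_{xx}v)\,ds\Bigr)+\mathcal{E}(h)_{s}\,\bigl\langle D_{x}v+v\,h_{s},\,dB_{s}\bigr\rangle.
\]
Since $\tfrac{1}{2}\int\text{tr}[A_{s}\,d\langle B\rangle_{s}]-\int G(A_{s})\,ds$ is a non-increasing $G$-martingale and $\mathcal{E}(h)_{s}\ge0$, the first term integrates to a non-increasing $G$-martingale, while the second is a $G$-martingale; hence $M$ is a $G$-martingale on $[r,t]$. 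Conditioning at $r$ gives $\hat{\mathbb{E}}_{r}[\varphi(\tilde B_{t}-\tilde B_{r})\,\mathcal{E}(h)_{t}]=v(r,0)\,\mathcal{E}(h)_{r}=u^{\varphi}(t-r,0)\,\mathcal{E}(h)_{r}$.

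From this, combined with the consistency identity and the standard freezing lemma for conditional $G$-expectations, one reads off that the increment $\tilde B_{t}-\tilde B_{r}$ is, under $\tilde{\mathbb{E}}$, independent of the past and has the law determined by $u^{\varphi}$; iterating over a partition $0\le t_{1}<\cdots<t_{n}\le T$ via the tower property of $\hat{\mathbb{E}}$ shows that $\tilde{\mathbb{E}}$ restricted to $\tilde{\mathcal H}$ coincides with the $G$-expectation generated by a $G$-Brownian motion with the same generating function $G$. Together with $\tilde B_{0}=0$ and path-continuity (inherited from $B$), this proves that $(\tilde B_{t})_{0\le t\le T}$ is a $G$-Brownian motion on $(\Omega,\tilde{\mathcal H},\tilde{\mathbb{E}})$; general test functions $\psi\in C_{b.Lip}(\mathbb{R}^{n\times d})$ are covered by the same smooth-approximation argument.

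The step I expect to be the real obstacle is the $C^{1,2}$ regularity of $v$: this is precisely where uniform parabolicity --- hence the non-degeneracy condition --- is indispensable, since the $G$-It\^o formula cannot be applied to a mere viscosity solution. This is exactly the point that fails in the degenerate case treated in the rest of this paper, forcing the perturbation approach. A secondary, more technical hurdle is the integrability bookkeeping needed to ensure that $D_{x}v(\cdot,\tilde B-\tilde B_{r})+v(\cdot,\tilde B-\tilde B_{r})\,h\in M_{G}^{2}$ and that the integrands against $dB$ and against the decreasing process genuinely define $G$-martingales (not merely local ones), which again relies on the $L^{p}$-estimates for $\mathcal{E}(h)$ from (H) and on the verification that $\mathcal{E}(h)$ is a symmetric $G$-martingale.
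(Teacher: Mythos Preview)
The paper does not give its own proof of this theorem: Theorem~\ref{Gt0} is stated with the citation \cite{XSZ,Osuka} in the preliminaries and used as a black box input for the main perturbation argument in Theorem~\ref{Gt1}. Your sketch follows precisely the PDE (nonlinear Feynman--Kac) method of those references, which the paper explicitly identifies in its introduction as ``the so-called PDE method which usually applies Taylor's expansion or It\^{o}'s formula to the solutions of $G$-heat equations,'' and you correctly pinpoint that non-degeneracy is what provides the $C^{1,2}$ regularity needed to apply the $G$-It\^o formula --- exactly the obstruction the rest of the paper is designed to circumvent via the $\varepsilon$-perturbation.

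One technical caveat in your sketch: after the It\^o expansion of $M_{s}=v(s,\tilde B_{s}-\tilde B_{r})\mathcal{E}(h)_{s}$, you assert that the finite-variation term $\int\mathcal{E}(h)_{s}\bigl(\tfrac{1}{2}\mathrm{tr}[D^{2}_{xx}v\,d\langle B\rangle_{s}]-G(D^{2}_{xx}v)\,ds\bigr)$ is a non-increasing $G$-martingale. Non-increasing is clear since $\mathcal{E}(h)\ge0$ and $\tfrac{1}{2}\mathrm{tr}[A\,d\langle B\rangle]-G(A)\,ds\le0$; but showing that the integral against a decreasing $G$-martingale by a nonnegative adapted process is again a $G$-martingale (rather than merely a $G$-supermartingale) is a separate structural fact about $G$-expectations that deserves a reference or a line of justification. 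In \cite{XSZ,Osuka} this is handled either by invoking the $G$-martingale representation / decomposition results or by arguing directly that $\hat{\mathbb{E}}_{r}[M_{t}]=M_{r}$ and $\hat{\mathbb{E}}_{r}[-M_{t}]\le -M_{r}$ using the specific form of the decomposition; your write-up should make this step explicit.
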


The $G$-Novikov's condition guarantees that $(\mathcal{E}(h)_{t})_{0\leq t\leq
T}$ is a symmetric $G$-martingale. It worth noting that the non-degenerate
assumption is not needed here.

\begin{proposition}
\cite{XSZ,Osuka}\label{Gt0-1} If the $G$-Novikov's condition holds, then
$(\mathcal{E}(h)_{t})_{0\leq t\leq T}$ is a symmetric $G$-martingale on
$(\Omega,L_{G}^{1}(\Omega),\hat{\mathbb{E}})$.
\end{proposition}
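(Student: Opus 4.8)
The plan is to show that $\mathcal{E}(h)_t$ is a symmetric $G$-martingale by first establishing it on the dense set $L_{ip}(\Omega)$ (or rather as a limit of nice approximations), and then using the $G$-Novikov condition to control the limiting procedure. The natural starting point is $G$-It\^o's formula applied to the function $x\mapsto e^x$ and the $G$-It\^o process $X_t:=\int_0^t\langle h_s,dB_s\rangle-\frac12\int_0^t\langle h_sh_s^T,d\langle B\rangle_s\rangle$: this formally yields that $\mathcal{E}(h)_t$ solves the linear SDE $d\mathcal{E}(h)_t=\mathcal{E}(h)_t\langle h_t,dB_t\rangle$, i.e. $\mathcal{E}(h)_t=1+\int_0^t\mathcal{E}(h)_s\langle h_s,dB_s\rangle$. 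The point of the drift term $-\frac12\int_0^t\langle h_sh_s^T,d\langle B\rangle_s\rangle$ is precisely that the $d\langle B\rangle$ contributions cancel, so no $G(\cdot)$ nonlinearity survives. If one can make sense of the stochastic integral $\int_0^\cdot \mathcal{E}(h)_s\langle h_s,dB_s\rangle$ as an element of $M_G^1$, then it is automatically a symmetric $G$-martingale (G-It\^o integrals against $dB$ are symmetric $G$-martingales), hence so is $\mathcal{E}(h)_t$.

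The subtle part, and the main obstacle, is integrability: to apply $G$-It\^o's formula rigorously and to conclude $\mathcal{E}(h)_t\in L_G^1(\Omega_t)$ and $\mathcal{E}(h)_\cdot\langle h_\cdot,\cdot\rangle\in M_G^1$, one needs uniform bounds. I would proceed by truncation: set $h^N_t:=h_t\mathbf{1}_{\{|h_t|\le N\}}$ (or stop at $\tau_N$), for which everything is bounded and the martingale property is clear via It\^o's formula, and then pass $N\to\infty$. The $G$-Novikov condition (\ref{eq1-1}) with the extra $\delta$ is exactly what is needed here: by a Cauchy--Schwarz/H\"older argument under each $P\in\mathcal{P}$, writing $\mathcal{E}(h)_T=\mathcal{E}(h^{1+\delta})_T^{1/(1+\delta)}\cdot(\text{something involving }\exp(\tfrac12(1+\delta)\int_0^T\langle h h^T,d\langle B\rangle\rangle))^{\beta}$ for appropriate exponents (the standard Novikov-type splitting), one gets a uniform $L^p$-bound for $p>1$ on the family $\{\mathcal{E}(h^N)_T\}_N$. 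Uniform $L^p$-integrability for $p>1$ gives uniform integrability under the whole family $\mathcal{P}$ (this is where Theorem~\ref{the1.1} and the representation $\hat{\mathbb{E}}[\cdot]=\sup_P E_P[\cdot]$ enter), which upgrades $P$-a.s. convergence $\mathcal{E}(h^N)_t\to\mathcal{E}(h)_t$ to convergence in $L_G^1$, and lets us pass the conditional-expectation identity $\hat{\mathbb{E}}_s[\mathcal{E}(h^N)_t]=\mathcal{E}(h^N)_s$ to the limit. One must also verify $\mathcal{E}(h)_t\in L_G^1(\Omega_t)$ via Theorem~\ref{LG-ch}: quasi-continuity is inherited from the continuity of paths and of the exponential, and the tail condition $\hat{\mathbb{E}}[\mathcal{E}(h)_t I_{\{\mathcal{E}(h)_t\ge K\}}]\to0$ follows from the $L^p$-bound, $p>1$.

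For the symmetry, i.e. that $\{-\mathcal{E}(h)_t\}$ is also a $G$-martingale, the decisive structural fact is that the limiting representation $\mathcal{E}(h)_t=1+\int_0^t\mathcal{E}(h)_s\langle h_s,dB_s\rangle$ contains \emph{only} a $dB$-integral and no $d\langle B\rangle$-integral; $G$-It\^o integrals $\int_0^\cdot\eta_s dB^i_s$ with $\eta\in M_G^1$ are symmetric $G$-martingales (both the integral and its negative have zero conditional $G$-expectation increments), so the same holds for $\mathcal{E}(h)$. Concretely I would first prove the decomposition and the symmetric martingale property for the truncated $h^N$ (where $\mathcal{E}(h^N)_\cdot\langle h^N_\cdot,\cdot\rangle$ is manifestly in $M_G^2$), and then transfer both the decomposition and the symmetry to $h$ through the $L_G^1$-limit, using continuity of the $G$-It\^o integral in $M_G^1$ together with the convergence $\mathcal{E}(h^N)h^N\to\mathcal{E}(h)h$ in $M_G^1(0,T)$ — the latter again justified by the uniform $L^p$-estimate coming from $(H)$. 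The only genuinely delicate estimate is this last $M_G^1$-convergence of the integrands, which is why I expect the $L^p$-control of the exponential (the quantitative content of $G$-Novikov) to be the crux of the argument.
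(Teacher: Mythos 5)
The paper does not actually prove this proposition: it is quoted from \cite{XSZ} (Lemma 2.2) and \cite{Osuka} (Propositions 5.9--5.10), and your outline is essentially the argument given in those references --- It\^o's formula reduces $\mathcal{E}(h)$ to $1+\int_0^{\cdot}\mathcal{E}(h)_s\langle h_s,dB_s\rangle$, the statement is first proved for truncated/bounded $h^N$ (where the integrand lies in $M_G^2$ and the $dB$-integral is a symmetric $G$-martingale), and the slack $\delta$ in the $G$-Novikov condition gives, via the standard H\"older splitting, a uniform bound $\sup_N\hat{\mathbb{E}}\left[|\mathcal{E}(h^N)_T|^p\right]<\infty$ for some $p>1$, which is precisely the estimate the present paper records in Remark \ref{Rm1.1} (ii) and which allows passage to the limit in $L_G^1$. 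The one imprecision is your parenthetical claim that a $dB$-integral with integrand merely in $M_G^1$ is automatically a symmetric $G$-martingale: the $G$-It\^o integral (and its symmetry) is available for $M_G^2$ integrands, so the conclusion for the integrand $\mathcal{E}(h)h$ must indeed be obtained through the truncation-plus-uniform-$L^p$ limiting procedure you describe, not invoked directly.
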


\section{Main results}

We first present a convergence theorem for sequences of random variables in
the following exponential form.

\begin{proposition}
\label{Myth3.1} Let $h\in M_{G}^{2}(0,T;\mathbb{R}^{d})$ such that
$\mathbb{\hat{E}}[\exp\left(  \delta_{0}\int_{0}^{T}\langle h_{t}h_{t}%
^{T},d\langle{B}\rangle_{t}\rangle\right)  ]<\infty$ for some $\delta_{0}>0.$
For any fixed $\alpha,\beta\in\mathbb{R}$, we denote%
\[
J_{\varepsilon}:=\exp\left(  \alpha\varepsilon\int_{0}^{T}\langle h_{t}%
,dB_{t}\rangle-\frac{\beta\varepsilon^{2}}{2}\int_{0}^{T}\langle h_{t}%
h_{t}^{T},d\langle{B}\rangle_{t}\rangle\right)  ,\text{ for }\varepsilon>0.
\]
Then
\begin{equation}
\mathbb{\hat{E}}\left[  J_{\varepsilon}\right]  \rightarrow1,\text{ as
}\varepsilon\downarrow0,\label{Myeq3.1}%
\end{equation}
and
\begin{equation}
\mathbb{\hat{E}}\left[  -J_{\varepsilon}\right]  \rightarrow-1,\text{ as
}\varepsilon\downarrow0.\label{Myeq3.6}%
\end{equation}

\end{proposition}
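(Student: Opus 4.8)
The plan is to show that $\{J_\varepsilon\}_{\varepsilon>0}$ is uniformly integrable in the $G$-expectation sense and converges to $1$ q.s., so that a nonlinear version of the dominated convergence argument applies. First I would observe that as $\varepsilon\downarrow 0$ we have $J_\varepsilon\to 1$ q.s., since both $\int_0^T\langle h_t,dB_t\rangle$ and $\int_0^T\langle h_th_t^T,d\langle B\rangle_t\rangle$ are finite q.s.\ (the latter by the integrability hypothesis, the former because $h\in M_G^2(0,T;\mathbb{R}^d)$). The upper bound $\hat{\mathbb{E}}[J_\varepsilon]\le 1+o(1)$ is the substantive half: write $J_\varepsilon=\mathcal{E}(\alpha\varepsilon h)_T\cdot K_\varepsilon$, where $\mathcal{E}(\alpha\varepsilon h)_T=\exp(\alpha\varepsilon\int_0^T\langle h_t,dB_t\rangle-\frac{\alpha^2\varepsilon^2}{2}\int_0^T\langle h_th_t^T,d\langle B\rangle_t\rangle)$ is the Girsanov exponential and $K_\varepsilon=\exp(\frac{(\alpha^2-\beta)\varepsilon^2}{2}\int_0^T\langle h_th_t^T,d\langle B\rangle_t\rangle)$ is an explicit bounded-in-$L^p$ correction. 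By Hölder's inequality for $\hat{\mathbb{E}}$, $\hat{\mathbb{E}}[J_\varepsilon]\le \hat{\mathbb{E}}[\mathcal{E}(\alpha\varepsilon h)_T^{q}]^{1/q}\,\hat{\mathbb{E}}[K_\varepsilon^{q'}]^{1/q'}$ for conjugate exponents; for $\varepsilon$ small enough $\alpha\varepsilon h$ satisfies the $G$-Novikov condition (H), hence by Proposition \ref{Gt0-1} the exponential is a symmetric $G$-martingale and $\hat{\mathbb{E}}[\mathcal{E}(\alpha\varepsilon h)_T]=1$; combined with an $L^q$-estimate for $\mathcal{E}(\alpha\varepsilon h)_T$ coming from the same Novikov-type bound, the first factor is $1+o(1)$, and $\hat{\mathbb{E}}[K_\varepsilon^{q'}]\to 1$ because $(\alpha^2-\beta)\varepsilon^2\to 0$ and the exponent is dominated by $\delta_0\int_0^T\langle h_th_t^T,d\langle B\rangle_t\rangle$ for small $\varepsilon$, allowing Proposition \ref{DCT}(2) (downward monotone convergence in $L_G^1$) to be invoked on $K_\varepsilon^{q'}\downarrow 1$.

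For the matching lower bound, note that $-J_\varepsilon\le -1$ fails directly, so instead I would bound $\hat{\mathbb{E}}[-J_\varepsilon]=\hat{\mathbb{E}}[1-J_\varepsilon]-1\le \hat{\mathbb{E}}[|1-J_\varepsilon|]-1$ and show $\hat{\mathbb{E}}[|1-J_\varepsilon|]\to 0$. Since $|1-J_\varepsilon|\le 1+J_\varepsilon$ with $\hat{\mathbb{E}}[1+J_\varepsilon]$ bounded uniformly in small $\varepsilon$ by the first part, and $|1-J_\varepsilon|\to 0$ q.s., it suffices to establish uniform integrability: for each $N$, $\hat{\mathbb{E}}[|1-J_\varepsilon|I_{\{J_\varepsilon\ge N\}}]\le \hat{\mathbb{E}}[(1+J_\varepsilon)I_{\{J_\varepsilon\ge N\}}]$, and using the $L^q$-boundedness of $J_\varepsilon$ (from the Hölder estimate above) together with $c(\{J_\varepsilon\ge N\})\le N^{-1}\hat{\mathbb{E}}[J_\varepsilon]\le CN^{-1}$ and another Hölder step, this tail is $\le C N^{-(1-1/q)}$ uniformly in $\varepsilon$. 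Splitting $\hat{\mathbb{E}}[|1-J_\varepsilon|]\le \hat{\mathbb{E}}[|1-J_\varepsilon|\wedge N'] + \hat{\mathbb{E}}[|1-J_\varepsilon|I_{\{|1-J_\varepsilon|>N'\}}]$ and letting first $\varepsilon\to 0$ (using Proposition \ref{DCT} or bounded q.s.\ convergence on the truncated term) then $N'\to\infty$ gives $\hat{\mathbb{E}}[|1-J_\varepsilon|]\to 0$, which yields both \eqref{Myeq3.1} and \eqref{Myeq3.6} at once since $|\hat{\mathbb{E}}[J_\varepsilon]-1|\le\hat{\mathbb{E}}[|J_\varepsilon-1|]$ and likewise for $-J_\varepsilon$.

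The main obstacle I anticipate is the absence of the classical dominated convergence theorem under $\hat{\mathbb{E}}$, which is exactly why one must route everything through either the downward monotone convergence of Proposition \ref{DCT}(2) or through explicit uniform tail estimates; the delicate point is verifying that the dominating random variable $\exp(\delta_0\int_0^T\langle h_th_t^T,d\langle B\rangle_t\rangle)$ actually controls $J_\varepsilon^q$ for a fixed $q>1$ once $\varepsilon$ is small, i.e.\ choosing $q$ and the threshold on $\varepsilon$ so that $q\alpha^2\varepsilon^2/2 + |q(\alpha^2-\beta)|\varepsilon^2/2 \le \delta_0$ and simultaneously $q\alpha\varepsilon h$ still obeys (H); this is a matter of tracking constants but is where the hypothesis $\hat{\mathbb{E}}[\exp(\delta_0\int_0^T\langle h_th_t^T,d\langle B\rangle_t\rangle)]<\infty$ is genuinely used. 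A secondary technical check is that the stochastic integral term $\int_0^T\langle h_t,dB_t\rangle$ is handled correctly inside the exponential — this is absorbed cleanly by the Girsanov-exponential decomposition, so that only quadratic-variation-type integrals remain to be estimated by the hypothesis.
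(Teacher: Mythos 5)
Your upper bound $\limsup_{\varepsilon\downarrow0}\hat{\mathbb{E}}[J_{\varepsilon}]\leq1$ is sound and is essentially the paper's own argument: H\"older's inequality, the fact that the exponential of $q\alpha\varepsilon h$ is a symmetric $G$-martingale with expectation exactly $1$ for small $\varepsilon$ (Proposition \ref{Gt0-1}), and monotone convergence for the remaining quadratic-variation factor (arrange the H\"older split so that the $q$-th power of the martingale factor is exactly $\mathcal{E}(q\alpha\varepsilon h)_{T}$, which avoids your loose ``$L^{q}$-estimate gives $1+o(1)$'' step; also note that when the sign of the exponent is negative the factor increases to $1$ and you need Proposition \ref{DCT}(1), not (2)). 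The genuine gap is in the second half. You reduce the lower bound in \eqref{Myeq3.1} and all of \eqref{Myeq3.6} to the claim $\hat{\mathbb{E}}[|1-J_{\varepsilon}|]\rightarrow0$, and the decisive step there is that the truncated term $\hat{\mathbb{E}}[|1-J_{\varepsilon}|\wedge N^{\prime}]$ vanishes as $\varepsilon\downarrow0$ ``using Proposition \ref{DCT} or bounded q.s.\ convergence''. Neither tool applies: Proposition \ref{DCT} is a \emph{monotone} convergence theorem, and $|1-J_{\varepsilon}|\wedge N^{\prime}$ is not monotone in $\varepsilon$ (the sign of $\alpha\varepsilon\int_{0}^{T}\langle h_{t},dB_{t}\rangle$ depends on $\omega$); and under a sublinear expectation, bounded quasi-sure (even everywhere) convergence does not imply convergence of $\hat{\mathbb{E}}$. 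For instance, in dimension one with $\underline{\sigma}^{2}<\overline{\sigma}^{2}$, taking continuous bumps $g_{n}$ of height $1$ whose shrinking supports sweep across $[\underline{\sigma}^{2}T,\overline{\sigma}^{2}T]$ gives $X_{n}=g_{n}(\langle B\rangle_{T})\rightarrow0$ pointwise with $0\leq X_{n}\leq1$ and $X_{n}\in L_{G}^{1}(\Omega)$, yet $\hat{\mathbb{E}}[X_{n}]=1$ for every $n$ (test against the constant-volatility measures in $\mathcal{P}$ of Theorem \ref{the1.1}). This failure of dominated convergence is precisely the obstacle the proposition exists to circumvent, so it cannot be waved through at this point; your uniform-integrability and capacity tail estimates are fine but do not address it.

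Moreover, the intermediate claim $\hat{\mathbb{E}}[|1-J_{\varepsilon}|]\rightarrow0$ is, in the paper, obtained only \emph{after} the proposition (via even powers and the binomial theorem in the proof of Theorem \ref{Gt1}); proving it directly essentially requires $\hat{\mathbb{E}}[-J_{\varepsilon}]\rightarrow-1$ already in hand (e.g.\ $\hat{\mathbb{E}}[|1-J_{\varepsilon}|^{2}]\leq\hat{\mathbb{E}}[J_{\varepsilon}^{2}]+2\hat{\mathbb{E}}[-J_{\varepsilon}]+1$), so your route is circular. The missing idea is convexity. For the lower bound in \eqref{Myeq3.1}, the sublinear Jensen inequality gives $\hat{\mathbb{E}}[J_{\varepsilon}]\geq\exp\left(\hat{\mathbb{E}}\left[\alpha\varepsilon\int_{0}^{T}\langle h_{t},dB_{t}\rangle-\frac{\beta\varepsilon^{2}}{2}\int_{0}^{T}\langle h_{t}h_{t}^{T},d\langle B\rangle_{t}\rangle\right]\right)=\exp\left(\frac{\varepsilon^{2}}{2}\hat{\mathbb{E}}\left[-\beta\int_{0}^{T}\langle h_{t}h_{t}^{T},d\langle B\rangle_{t}\rangle\right]\right)\rightarrow1$, using that the stochastic integral is a symmetric $G$-martingale with zero mean. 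For the upper bound in \eqref{Myeq3.6}, apply the classical Jensen inequality under each $P\in\mathcal{P}$, use $E_{P}[\int_{0}^{T}\langle h_{t},dB_{t}\rangle]=0$, and take the supremum exploiting the monotonicity of $y\mapsto-e^{-y}$, which yields $\hat{\mathbb{E}}[-J_{\varepsilon}]\leq-\exp\left(-\frac{\varepsilon^{2}}{2}\hat{\mathbb{E}}\left[\beta\int_{0}^{T}\langle h_{t}h_{t}^{T},d\langle B\rangle_{t}\rangle\right]\right)\rightarrow-1$; the matching bound $\hat{\mathbb{E}}[-J_{\varepsilon}]\geq-\hat{\mathbb{E}}[J_{\varepsilon}]$ is immediate. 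These Jensen-type arguments replace the dominated-convergence step on which your proposal relies.
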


\begin{proof}
\textit{Part I: Proof of (\ref{Myeq3.1}).}

We first show that $\lim_{\varepsilon\rightarrow0}\mathbb{\hat{E}}\left[
J_{\varepsilon}\right]  \leq1.$ Let any $q>1$ be given and $q^{\prime}$ be the
corresponding H\"{o}lder conjugate. Denote $\alpha_{\varepsilon}=\frac
{q\alpha^{2}\varepsilon^{2}}{2}$. Then by H\"{o}lder's inequality, we have
\begin{align*}
&  \mathbb{\hat{E}}\left[  J_{\varepsilon}\right]  =\mathbb{\hat{E}}\left[
\exp\left(  \alpha\varepsilon\int_{0}^{T}\langle h_{t},dB_{t}\rangle
-\frac{\beta\varepsilon^{2}}{2}\int_{0}^{T}\langle h_{t}h_{t}^{T},d\langle
{B}\rangle_{t}\rangle\right)  \right]  \\
&  \leq\mathbb{\hat{E}}\left[  \left(  \exp\left(  \alpha\varepsilon\int%
_{0}^{T}\langle h_{t},dB_{t}\rangle-\alpha_{\varepsilon}\int_{0}^{T}\langle
h_{t}h_{t}^{T},d\langle{B}\rangle_{t}\rangle\right)  \right)  ^{q}\right]
^{\frac{1}{q}}\mathbb{\hat{E}}\left[  \left(  \exp\left(  (\alpha
_{\varepsilon}-\frac{1}{2}\beta\varepsilon^{2})\int_{0}^{T}\langle h_{t}%
h_{t}^{T},d\langle{B}\rangle_{t}\rangle\right)  \right)  ^{q^{\prime}}\right]
^{\frac{1}{q^{\prime}}}.
\end{align*}
Note that $\mathbb{\hat{E}}[\exp\left(  \delta_{0}\int_{0}^{T}\langle
h_{t}h_{t}^{T},d\langle{B}\rangle_{t}\rangle\right)  ]<\infty$ for some
$\delta_{0}>0$ implies $\mathbb{\hat{E}}[\exp\left(  \delta^{\prime}\int%
_{0}^{T}\langle h_{t}h_{t}^{T},d\langle{B}\rangle_{t}\rangle\right)  ]<\infty$
for each $\delta^{\prime}\leq\delta_{0}.$ Then applying Proposition
\ref{Gt0-1}, we get%
\begin{align*}
&  \mathbb{\hat{E}}\left[  \left(  \exp\left(  \alpha\varepsilon\int_{0}%
^{T}\langle h_{t},dB_{t}\rangle-\alpha_{\varepsilon}\int_{0}^{T}\langle
h_{t}h_{t}^{T},d\langle{B}\rangle_{t}\rangle\right)  \right)  ^{q}\right]  \\
&  =\mathbb{\hat{E}}\left[  \exp\left(  \alpha\varepsilon q\int_{0}^{T}\langle
h_{t},dB_{t}\rangle-\frac{\alpha^{2}\varepsilon^{2}q^{2}}{2}\int_{0}%
^{T}\langle h_{t}h_{t}^{T},d\langle{B}\rangle_{t}\rangle\right)  \right]
=1,\text{ when }\varepsilon>0\text{ is small}.
\end{align*}
Thus,%
\[
\mathbb{\hat{E}}\left[  J_{\varepsilon}\right]  \leq\mathbb{\hat{E}}\left[
\left(  \exp\left(  (\alpha_{\varepsilon}-\frac{1}{2}\beta\varepsilon^{2}%
)\int_{0}^{T}\langle h_{t}h_{t}^{T},d\langle{B}\rangle_{t}\rangle\right)
\right)  ^{q^{\prime}}\right]  ^{\frac{1}{q^{\prime}}},\text{ when
}\varepsilon>0\text{ is small}.
\]
It remains to show that
\begin{align*}
\mathbb{\hat{E}}\left[  \left(  \exp\left(  (\alpha_{\varepsilon}-\frac{1}%
{2}\beta\varepsilon^{2})\int_{0}^{T}\langle h_{t}h_{t}^{T},d\langle{B}%
\rangle_{t}\rangle\right)  \right)  ^{q^{\prime}}\right]   &  =\mathbb{\hat
{E}}\left[  \left(  \exp\left(  \frac{1}{2}\varepsilon^{2}(q\alpha^{2}%
-\beta)\int_{0}^{T}\langle h_{t}h_{t}^{T},d\langle{B}\rangle_{t}%
\rangle\right)  \right)  ^{q^{\prime}}\right]  \\
&  \rightarrow1,\text{ as }\varepsilon\downarrow0\text{.}%
\end{align*}
If $q\alpha^{2}-\beta\leq0,$ since
\[
\left(  \exp\left(  \frac{1}{2}\varepsilon^{2}(q\alpha^{2}-\beta)\int_{0}%
^{T}\langle h_{t}h_{t}^{T},d\langle{B}\rangle_{t}\rangle\right)  \right)
^{q^{\prime}}\uparrow1,\text{ as }\varepsilon\downarrow0\text{,}%
\]
we have, by Proposition \ref{DCT} (1),
\[
\mathbb{\hat{E}}\left[  \left(  \exp\left(  \frac{1}{2}\varepsilon^{2}%
(q\alpha^{2}-\beta)\int_{0}^{T}\langle h_{t}h_{t}^{T},d\langle{B}\rangle
_{t}\rangle\right)  \right)  ^{q^{\prime}}\right]  \uparrow1,\text{ as
}\varepsilon\downarrow0\text{.}%
\]
If $q\alpha^{2}-\beta\geq0$, from Theorem \ref{LG-ch} and the assumption that
$\mathbb{\hat{E}}[\exp\left(  \delta_{0}\int_{0}^{T}\langle h_{t}h_{t}%
^{T},d\langle{B}\rangle_{t}\rangle\right)  ]<\infty$ for some $\delta_{0}>0,$
it is easy to see that
\[
\left(  \exp\left(  \frac{1}{2}\varepsilon^{2}(q\alpha^{2}-\beta)\int_{0}%
^{T}\langle h_{t}h_{t}^{T},d\langle{B}\rangle_{t}\rangle\right)  \right)
^{q^{\prime}}\in L_{G}^{1}(\Omega),\text{ for }\varepsilon>0\text{ small}.
\]
Note that
\[
\left(  \exp\left(  \frac{1}{2}\varepsilon^{2}(q\alpha^{2}-\beta)\int_{0}%
^{T}\langle h_{t}h_{t}^{T},d\langle{B}\rangle_{t}\rangle\right)  \right)
^{q^{\prime}}\downarrow1,\text{ as }\varepsilon\downarrow0\text{.}%
\]
Applying Proposition \ref{DCT} (2), we then get
\begin{equation}
\mathbb{\hat{E}}\left[  \left(  \exp\left(  \frac{1}{2}\varepsilon^{2}%
(q\alpha^{2}-\beta)\int_{0}^{T}\langle h_{t}h_{t}^{T},d\langle{B}\rangle
_{t}\rangle\right)  \right)  ^{q^{\prime}}\right]  \downarrow1,\text{ as
}\varepsilon\downarrow0\text{.}%
\end{equation}

Now we prove that $\lim_{\varepsilon\rightarrow0}\mathbb{\hat{E}}\left[
J_{\varepsilon}\right]  \geq1.$ From Jensen's inequality, we get%
\begin{align*}
\mathbb{\hat{E}}\left[  J_{\varepsilon}\right]   &  =\mathbb{\hat{E}}\left[
\exp\left(  \alpha\varepsilon\int_{0}^{T}\langle h_{t},dB_{t}\rangle
-\frac{\beta\varepsilon^{2}}{2}\int_{0}^{T}\langle h_{t}h_{t}^{T},d\langle
{B}\rangle_{t}\rangle\right)  \right]  \\
&  \geq\exp\left(  \mathbb{\hat{E}}\left[  \alpha\varepsilon\int_{0}%
^{T}\langle h_{t},dB_{t}\rangle-\frac{\beta\varepsilon^{2}}{2}\int_{0}%
^{T}\langle h_{t}h_{t}^{T},d\langle{B}\rangle_{t}\rangle\right]  \right)  \\
&  =\exp\left(  \mathbb{\hat{E}}\left[  -\frac{\beta\varepsilon^{2}}{2}%
\int_{0}^{T}\langle h_{t}h_{t}^{T},d\langle{B}\rangle_{t}\rangle\right]
\right)  \\
&  =\exp\left(  \frac{\varepsilon^{2}}{2}\mathbb{\hat{E}}{\mathbb{[-}}%
\beta\int_{0}^{T}\langle h_{t}h_{t}^{T},d\langle{B}\rangle_{t}\rangle]\right)
\\
&  \rightarrow1,\text{ as }\varepsilon\downarrow0.
\end{align*}

\textit{Part II: Proof of (\ref{Myeq3.6}).}

Applying the classical Jensen's inequality under each $P\in\mathcal{P}$, we
have
\begin{align*}
\mathbb{\hat{E}}\left[  -J_{\varepsilon}\right]   &  =\mathbb{\hat{E}}\left[
-\exp\left(  \alpha\varepsilon\int_{0}^{T}\langle h_{t},dB_{t}\rangle
-\frac{\beta\varepsilon^{2}}{2}\int_{0}^{T}\langle h_{t}h_{t}^{T},d\langle
{B}\rangle_{t}\rangle\right)  \right]  \\
&  =\sup_{P\in\mathcal{P}}E_{P}\left[  -\exp\left(  \alpha\varepsilon\int%
_{0}^{T}\langle h_{t},dB_{t}\rangle-\frac{\beta\varepsilon^{2}}{2}\int_{0}%
^{T}\langle h_{t}h_{t}^{T},d\langle{B}\rangle_{t}\rangle\right)  \right]  \\
&  \leq\sup_{P\in\mathcal{P}}\left\{  -\exp\left(  E_{P}\left[  \alpha
\varepsilon\int_{0}^{T}\langle h_{t},dB_{t}\rangle-\frac{\beta\varepsilon^{2}%
}{2}\int_{0}^{T}\langle h_{t}h_{t}^{T},d\langle{B}\rangle_{t}\rangle\right]
\right)  \right\}  \\
&  =\sup_{P\in\mathcal{P}}\left\{  -\exp\left(  E_{P}\left[  -\frac
{\beta\varepsilon^{2}}{2}\int_{0}^{T}\langle h_{t}h_{t}^{T},d\langle{B}%
\rangle_{t}\rangle\right]  \right)  \right\}  \\
&  =\sup_{P\in\mathcal{P}}\left\{  -\exp\left(  -\frac{\beta\varepsilon^{2}%
}{2}E_{P}\left[  \int_{0}^{T}\langle h_{t}h_{t}^{T},d\langle{B}\rangle
_{t}\rangle\right]  \right)  \right\}  .
\end{align*}
Since $y\rightarrow-\exp(-y)$ is increasing, we further get
\begin{align*}
\mathbb{\hat{E}}\left[  -J_{\varepsilon}\right]   &  \leq\sup_{P\in
\mathcal{P}}\left\{  -\exp\left(  -\frac{\beta\varepsilon^{2}}{2}E_{P}\left[
\int_{0}^{T}\langle h_{t}h_{t}^{T},d\langle{B}\rangle_{t}\rangle\right]
\right)  \right\}  \\
&  =-\exp\left(  -\sup_{P\in\mathcal{P}}\frac{\beta\varepsilon^{2}}{2}%
E_{P}\left[  \int_{0}^{T}\langle h_{t}h_{t}^{T},d\langle{B}\rangle_{t}%
\rangle\right]  \right)  \\
&  =-\exp\left(  -\frac{\varepsilon^{2}}{2}\mathbb{\hat{E}}\left[  \beta
\int_{0}^{T}\langle h_{t}h_{t}^{T},d\langle{B}\rangle_{t}\rangle\right]
\right)  \\
&  \rightarrow-1,\text{ as }\varepsilon\downarrow0.
\end{align*}
Moreover, from Part I, we know that
\[
\mathbb{\hat{E}}\left[  -J_{\varepsilon}\right]  \geq-\mathbb{\hat{E}%
}[J_{\varepsilon}]\rightarrow-1,\text{ as }\varepsilon\downarrow0.
\]

\end{proof}

The main result of our paper is the following Girsanov theorem for
$G$-Brownian motion in the degenerate case.

Let $h\in M_{G}^{2}(0,T;\mathbb{R}^{d}).$ We define
\[
\tilde{B}_{t}:=B_{t}-\int_{0}^{t}d\langle{B}\rangle_{s}h_{s},\ \text{for}%
\ 0\leq t\leq T,
\]
and
\[
\tilde{\mathcal{H}}:=\{\varphi(\tilde{B}_{t_{1}},\tilde{B}_{t_{2}}%
\cdots,\tilde{B}_{t_{n}}):n\in\mathbb{N},0\leq t_{1}<t_{2}\cdots<t_{n}\leq
T,\varphi\in C_{b.Lip}(\mathbb{R}^{n\times d})\}.
\]

\begin{theorem}
\label{Gt1} Assume that for $h\in M_{G}^{2}(0,T;\mathbb{R}^{d}),$ the
$G$-Novikov's condition (H) holds for some $\delta>0$ and $\mathbb{\hat{E}%
}\left[  \exp\left(  \int_{0}^{T}\delta_{0}|h_{t}|^{2}dt)\right)  \right]
<\infty$ for some $\delta_{0}>0$. Define a sublinear expectation
$\tilde{\mathbb{E}}$ by
\[
\tilde{\mathbb{E}}[\xi]:=\hat{\mathbb{E}}[\xi\mathcal{E}(h)_{T}],\ \text{for
}\xi\in\ \tilde{\mathcal{H}}.\
\]
Then the process $(\tilde{B}_{t})_{t\geq0}$ is a $G$-Brownian motion on the
sublinear expectation space $(\Omega,\tilde{\mathcal{H}},\tilde{\mathbb{E}})$.
\end{theorem}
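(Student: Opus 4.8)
The plan is to realize the degenerate $G$-Brownian motion $B$ as a "projection" of a non-degenerate one living on an enlarged space, apply Theorem \ref{Gt0} there, and then pass to the limit using Proposition \ref{Myth3.1} to kill the error terms. Concretely, I would first construct a product sublinear expectation space $(\bar\Omega,\bar{\mathcal H},\bar{\mathbb E})=(\Omega\times\Omega',\ \cdot\ ,\ \hat{\mathbb E}\otimes E')$, where $(\Omega',\mathcal H',E')$ carries a classical $d$-dimensional Brownian motion $W$ independent of everything on $\Omega$. Then $B^\varepsilon:=B+\varepsilon W$ is, on the product space, a $G_\varepsilon$-Brownian motion with $G_\varepsilon(A)=G(A)+\tfrac12\varepsilon^2\operatorname{tr}[A]$, which is non-degenerate (lower variance bound $\underline\sigma^2=\varepsilon^2$). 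One checks $\langle B^\varepsilon\rangle_t=\langle B\rangle_t+\varepsilon^2 t\,I_d$. I would define $\tilde B^\varepsilon_t:=B^\varepsilon_t-\int_0^t d\langle B^\varepsilon\rangle_s\,h_s=\tilde B_t-\varepsilon^2\int_0^t h_s\,ds+\varepsilon W_t$ and the associated exponential $\mathcal E_\varepsilon(h)_t=\exp(\int_0^t\langle h_s,dB^\varepsilon_s\rangle-\tfrac12\int_0^t\langle h_sh_s^T,d\langle B^\varepsilon\rangle_s\rangle)$. The point of the extra hypothesis $\hat{\mathbb E}[\exp(\int_0^T\delta_0|h_t|^2dt)]<\infty$ is exactly that $h$ then satisfies the $G_\varepsilon$-Novikov condition uniformly in small $\varepsilon$ (the new term in $\langle B^\varepsilon\rangle$ contributes $\varepsilon^2\int_0^T|h_t|^2dt$ to the exponent), so Theorem \ref{Gt0} applies on the product space: $(\tilde B^\varepsilon_t)_{0\le t\le T}$ is a $G_\varepsilon$-Brownian motion under $\bar{\mathbb E}[\,\cdot\,\mathcal E_\varepsilon(h)_T]$.

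Next I would fix $\varphi\in C_{b.Lip}(\mathbb R^{n\times d})$ and $0\le t_1<\cdots<t_n\le T$ and compare three quantities: $\tilde{\mathbb E}[\varphi(\tilde B_{t_1},\dots,\tilde B_{t_n})]=\hat{\mathbb E}[\varphi(\tilde B_{t_\cdot})\mathcal E(h)_T]$; the product-space quantity $\bar{\mathbb E}[\varphi(\tilde B^\varepsilon_{t_\cdot})\mathcal E_\varepsilon(h)_T]$, which by Theorem \ref{Gt0} equals $u^\varphi(t_n,0)$ computed from the $G_\varepsilon$-heat equation evaluated along the prescribed times (i.e. the $G_\varepsilon$-Brownian-motion value); and the target $u^\varphi$ for the $G$-heat equation. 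The $G_\varepsilon$-value converges to the $G$-value as $\varepsilon\downarrow0$ by stability of viscosity solutions (since $G_\varepsilon\to G$ uniformly on compacts), which handles the right-hand side. For the left-hand side I must show $\bar{\mathbb E}[\varphi(\tilde B^\varepsilon_{t_\cdot})\mathcal E_\varepsilon(h)_T]\to\hat{\mathbb E}[\varphi(\tilde B_{t_\cdot})\mathcal E(h)_T]$. I would split this into (a) replacing $\tilde B^\varepsilon$ by $\tilde B$ inside $\varphi$: the difference $\tilde B^\varepsilon-\tilde B=\varepsilon W-\varepsilon^2\int_0^\cdot h_sds$ is small, and by Lipschitz continuity of $\varphi$ and boundedness one controls this using $\bar{\mathbb E}[\varepsilon|W_{t_i}|\,|\mathcal E_\varepsilon(h)_T|]\le\varepsilon\,\bar{\mathbb E}[|W_{t_i}|^2]^{1/2}\bar{\mathbb E}[\mathcal E_\varepsilon(h)_T^2]^{1/2}$ with the second factor bounded uniformly in $\varepsilon$ by the uniform Novikov bound; and (b) replacing $\mathcal E_\varepsilon(h)_T$ by $\mathcal E(h)_T$: here $\mathcal E_\varepsilon(h)_T=\mathcal E(h)_T\cdot J_\varepsilon'$ where $J_\varepsilon'$ collects the $\varepsilon W$-contributions, namely $J_\varepsilon'=\exp(\varepsilon\int_0^T\langle h_s,dW_s\rangle-\tfrac{\varepsilon^2}2\int_0^T|h_s|^2ds)$ (a classical exponential martingale in the $W$-variable, mean one under $E'$), so after integrating out $W$ one is left with an expression of exactly the exponential form treated in Proposition \ref{Myth3.1}, and both $\hat{\mathbb E}[\varphi\,\mathcal E(h)_T\,(J_\varepsilon-1)]$-type errors and their negatives go to zero.

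Finally, since $(\tilde B^\varepsilon)$ is a $G_\varepsilon$-Brownian motion for every $\varepsilon>0$, it has $B_0$-type initial condition zero, stationary and independent increments with respect to $\bar{\mathbb E}[\,\cdot\,\mathcal E_\varepsilon(h)_T]$; passing to the limit in the finite-dimensional characterization (via the heat-equation value and the convergence just established) shows that $\tilde{\mathbb E}[\varphi(\tilde B_{t_1},\dots,\tilde B_{t_n})]$ equals the value of the $G$-heat-equation flow, which is precisely the defining property of a $G$-Brownian motion; stationarity and independence of increments of $\tilde B$ under $\tilde{\mathbb E}$ follow the same way by testing against increment functionals. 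I expect the main obstacle to be step (b) above — controlling the discrepancy between $\mathcal E_\varepsilon(h)_T$ and $\mathcal E(h)_T$ inside the nonlinear expectation, because $\hat{\mathbb E}$ is only sublinear and the naive estimate $|\hat{\mathbb E}[X]-\hat{\mathbb E}[Y]|\le\hat{\mathbb E}[|X-Y|]$ is not enough when $X-Y$ itself is an exponential that is not small in $L^1_G$; this is exactly why Proposition \ref{Myth3.1} is proved first (giving two-sided control $\hat{\mathbb E}[J_\varepsilon]\to1$ and $\hat{\mathbb E}[-J_\varepsilon]\to-1$), and the uniform-in-$\varepsilon$ integrability furnished by the hypothesis $\hat{\mathbb E}[\exp(\int_0^T\delta_0|h_t|^2dt)]<\infty$ is what makes the Hölder/Cauchy–Schwarz splittings legitimate.
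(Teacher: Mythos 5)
Your overall strategy is exactly the paper's: build the product $\bar G$-expectation space carrying $(B,W)$, perturb to the non-degenerate $G_\varepsilon$-Brownian motion $B^\varepsilon=B+\varepsilon W$ with $\langle B^\varepsilon\rangle_t=\langle B\rangle_t+\varepsilon^2 tI$, verify the Novikov condition for $B^\varepsilon$ from the extra hypothesis, apply Theorem \ref{Gt0}, and pass to the limit. However, there is a genuine gap at the decisive step (your step (b)). Writing $N_T^{\varepsilon}=\mathcal{E}(h)_T\,J'_\varepsilon$ with $J'_\varepsilon=\exp(\varepsilon\int_0^T\langle h_s,dW_s\rangle-\tfrac{\varepsilon^2}{2}\int_0^T|h_s|^2ds)$ is correct, but ``integrating out $W$'' is not a legitimate operation under $\bar{\mathbb{E}}$: there is no Fubini theorem for the sublinear expectation, and the representing measures for $\bar{\mathbb{E}}$ need not be product measures, so the fact that $J'_\varepsilon$ has mean one under Wiener measure does not by itself control $\bar{\mathbb{E}}[\varphi\,\mathcal{E}(h)_T(J'_\varepsilon-1)]$. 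Moreover, the two-sided limits of Proposition \ref{Myth3.1}, $\bar{\mathbb{E}}[\pm J_\varepsilon]\to\pm1$, do \emph{not} imply $\bar{\mathbb{E}}[|J_\varepsilon-1|]\to0$ under a sublinear expectation (e.g.\ $X=1+B_1$ has $\hat{\mathbb{E}}[\pm X]=\pm1$ but $\hat{\mathbb{E}}[|X-1|]>0$), so invoking the proposition and asserting that the ``errors and their negatives go to zero'' does not close the argument. The paper supplies the missing mechanism: after H\"older, it bounds $\bar{\mathbb{E}}[|J'_\varepsilon-1|^{p'}]^{1/p'}\le\bar{\mathbb{E}}[(J'_\varepsilon-1)^{N}]^{1/N}$ for an even $N\ge p'$, expands $(J'_\varepsilon-1)^N$ by the binomial theorem, applies Proposition \ref{Myth3.1} (which is stated for general $\alpha,\beta$ precisely so that it covers each power $(J'_\varepsilon)^{N-k}$, with both the $+$ and $-$ versions needed according to the parity of $k$), and uses $\sum_k\binom{N}{k}(-1)^k=0$. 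Without this (or an equivalent uniform-in-$P$ argument), your limit on the right-hand side is not established.

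A secondary, fixable misstep is in your step (a): you control the replacement of $\tilde B^\varepsilon$ by $\tilde B$ via Cauchy--Schwarz with $\bar{\mathbb{E}}[\mathcal{E}_\varepsilon(h)_T^2]^{1/2}$ ``bounded uniformly by the uniform Novikov bound,'' but the Novikov condition only yields $\hat{\mathbb{E}}[|\mathcal{E}(h)_T|^{p}]<\infty$ for \emph{some} $p>1$ (Remark \ref{Rm1.1}(ii)), not square-integrability; you should use H\"older with that exponent $p$, as the paper does (its term $I_2$), placing the Lipschitz increment $\varepsilon|W_{t_i}|+\varepsilon^2\int_0^{t_i}|h_s|ds$ in $L^{p'}$. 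Your appeal to viscosity-solution stability for the left-hand side is workable but unnecessary: on the product space the elementary Lipschitz estimate $|\bar{\mathbb{E}}[\varphi(B^\varepsilon_{t_\cdot})]-\bar{\mathbb{E}}[\varphi(B_{t_\cdot})]|\le L_\varphi\varepsilon\,\bar{\mathbb{E}}[\sum_i|W_{t_i}|]$ already suffices.
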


\begin{remark}\label{Rm1.1}
	\upshape{
\begin{description}
	\item[{\rm (i)}]
	Compared with Theorem \ref{Gt0},  we have imposed in Theorem \ref{Gt1} an additional
	assumption that $\mathbb{\hat{E}}\left[\exp\left(  \int_{0}^{T}\delta_0 |h_{s}|	^{2}ds)\right) \right ]<\infty$ for some $\delta_0>0$. In the non-degenerate case, this assumption is
	implied by the $G$-Novikov's condition by noting that, from Corollary 5.7 in Chapter III of \cite{Peng 1} and Remark \ref{Myre2.1}, $\frac{d\langle{B}	\rangle_{t}}{dt}\geq\underline{\sigma}^{2}I_{d\times d}$. But in the degenerate case, it is needed for our arguments.
	
	\item[{\rm (ii)}]
	According to  the proofs of Lemma 2.2 in
	\cite{XSZ} and  Proposition 5.10 in \cite{Osuka}, the $G$-Novikov's
	implies that $\hat{\mathbb{E}}[|\mathcal{E}(B)_{T}|^{p}]<\infty$ for some
	$p>1$. This property will be used in the proof of the main theorem.
\end{description}
		}
\end{remark}

To prove Theorem \ref{Gt1}, it suffices to show for $t_{1}\leq t_{2}\leq
\cdots\leq t_{n}\leq T$ and $\varphi\in C_{b.Lip}(\mathbb{R}^{n\times d})$, it
holds that
\begin{equation}
\hat{\mathbb{E}}[\varphi(B_{t_{1}},B_{t_{2}},\cdots,B_{t_{n}})]=\tilde
{\mathbb{E}}[\varphi(\tilde{B}_{t_{1}},\tilde{B}_{t_{2}},\cdots,\tilde
{B}_{t_{n}})]. \label{Myeq3.4}%
\end{equation}

Since $B$ is possibly degenerate, we use the following product space method in
the nonlinear expectation setting to add a small linear Brownian motion term
to $B$, so to get a non-degenerate perturbation $B^{\varepsilon}$.

Let
\[
\bar{G}(A^{\prime})=G(A)+\frac{1}{2}\text{tr}[C],\ \text{ for}\ A^{\prime
}=\left[
\begin{array}
[c]{cc}%
A & B\\
B & C
\end{array}
\right]  \in\mathbb{S}(2d),\text{ where }A,B,C\in\mathbb{S}(d).
\]
Following the method in Section 2, we can construct an auxiliary $\bar{G}%
$-expectation space $(\bar{\Omega},L_{\bar{G}}^{1}(\bar{\Omega}),\mathbb{\bar
{E}})$ such that

\begin{description}
\item[(i)] $\bar{\Omega}=\Omega\times C_{0}([0,\infty);\mathbb{R}^{d})$;

\item[(ii)] $\bar{B}_{t}:=(B_{t},W_{t})_{t\geq0}$ is a $2d$-dimensional
$\bar{G}$-Brownian motion, where $W$ is the canonical process on
$C_{0}([0,\infty);\mathbb{R}^{d}).$
\end{description}

Moreover, by the definition of $\mathbb{\bar{E}}$, we also have:

\begin{lemma}
Let $(\bar{\Omega},L_{\bar{G}}^{1}(\bar{\Omega}),\mathbb{\bar{E}})$ be defined
as above. Then

\begin{description}
\item[(iii)] $\mathbb{\bar{E}}=\mathbb{\hat{E}}$ on $L_{G}^{1}(\Omega)$ and
$(B_{t})_{t\geq0}$ is a $d$-dimensional ${G}$-Brownian motion under
$\mathbb{\bar{E}}$;

\item[(iv)] $(W_{t})_{t\geq0}$ is a $d$-dimensional standard Brownian motion
under $\mathbb{\bar{E}}$.
\end{description}
\end{lemma}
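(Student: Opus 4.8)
The plan is to verify claims (iii) and (iv) directly from the definition of $\bar{\mathbb{E}}$ as the $\bar{G}$-expectation on the product path space $\bar{\Omega}=\Omega\times C_0([0,\infty);\mathbb{R}^d)$, using the fact that $\bar{G}$ splits as $\bar{G}(A') = G(A) + \tfrac{1}{2}\mathrm{tr}[C]$, i.e.\ the $G$-part acts only on the $B$-block and the Laplacian part acts only on the $W$-block, with no coupling through $B$. The key observation is that this block-diagonal structure of $\bar{G}$ means the associated $\bar{G}$-heat equation, for a test function $\varphi(x)$ depending only on the first $d$ coordinates, reduces exactly to the $G$-heat equation for $\varphi$; and symmetrically, for $\varphi$ depending only on the last $d$ coordinates, it reduces to the classical heat equation.

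First I would prove (iii). Take $\xi\in L_{ip}(\Omega)$, so $\xi = \varphi(B_{t_1}, B_{t_2}-B_{t_1},\dots,B_{t_n}-B_{t_{n-1}})$ with $\varphi\in C_{b.Lip}(\mathbb{R}^{d\times n})$. Viewing $\xi$ as a random variable on $\bar\Omega$ that happens not to depend on the $W$-coordinates, I compute $\bar{\mathbb{E}}[\xi]$ via the iterated scheme from Step 2 in Section 2: at each stage one solves a $\bar{G}$-heat equation in $\mathbb{R}^{2d}$ with an initial condition that does not depend on the $w$-variables. Because $\bar{G}$ applied to a Hessian with no $w$-dependence is just $G$ applied to the $x$-block Hessian (the $\mathrm{tr}[C]$ term vanishes since $D^2_{ww}u = 0$ along the way), the $\bar{G}$-heat equation solution coincides with the $G$-heat equation solution, which has no $w$-dependence, so the induction closes. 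Hence $\bar{\mathbb{E}}[\xi] = \hat{\mathbb{E}}[\xi]$ for all $\xi\in L_{ip}(\Omega)$, and by continuous extension (both expectations are defined as completions under their respective $L^1$-norms, which therefore agree on $L_{ip}(\Omega)$) we get $\bar{\mathbb{E}} = \hat{\mathbb{E}}$ on $L^1_G(\Omega)$. Since $B$ is the canonical process on the first factor and $\bar{\mathbb{E}}$ restricted to functionals of $B$ equals $\hat{\mathbb{E}}$, properties $B_0=0$, stationary and independent increments, and the defining $G$-heat equation relation all transfer immediately, so $(B_t)_{t\ge 0}$ is a $d$-dimensional $G$-Brownian motion under $\bar{\mathbb{E}}$.

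Next I would prove (iv) by the symmetric argument: for $\eta = \psi(W_{t_1},\dots,W_{t_n}-W_{t_{n-1}})$ depending only on the $W$-coordinates, the iterated $\bar{G}$-heat equation scheme reduces to the classical heat equation ($\bar{G}$ of a Hessian with no $x$-dependence is $\tfrac{1}{2}\mathrm{tr}[C]$, i.e.\ half the $w$-Laplacian), so $\bar{\mathbb{E}}[\eta] = E[\psi(W_{t_1},\dots,W_{t_n}-W_{t_{n-1}})]$ where $E$ is the Wiener-measure expectation. By Remark after Definition of $G$-Brownian motion (the case $G(A)=\tfrac12\mathrm{tr}[A]$), this says $(W_t)_{t\ge 0}$ is a standard $d$-dimensional Brownian motion under $\bar{\mathbb{E}}$. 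The only mildly delicate point, which I expect to be the main obstacle, is bookkeeping the iterated-expectation scheme when a functional depends on increments at interleaved times from both factors — but for (iii) and (iv) as stated we only ever evaluate functionals of a single factor, so at each step the initial datum genuinely has no dependence on the other block of variables and the reduction is clean; one just needs to check that introducing the extra (dummy) variables and then solving the split PDE gives back the one-factor answer, which is exactly the content of the block-diagonal structure of $\bar{G}$.
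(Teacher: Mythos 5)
Your proposal is correct and follows essentially the same route as the paper: reduce via the iterated (Step 2) construction to functionals of a single increment, observe that the block structure of $\bar{G}$ makes the $w$-independent extension of the $G$-heat (resp.\ classical heat) solution a solution of the $\bar{G}$-heat equation, and conclude, with (iv) handled symmetrically. The only point to state explicitly is the uniqueness of viscosity solutions of the $\bar{G}$-heat equation, which is what the paper cites to identify $\bar{u}(r,x_1,x_2)$ with $u(r,x_1)$ and is implicit in your phrase that the two solutions ``coincide.''
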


\begin{proof}
We only prove that $\mathbb{\bar{E}}=\mathbb{\hat{E}}$ on $L_{G}^{1}(\Omega)$,
which implies $(B_{t})_{t\geq0}$ is a ${G}$-Brownian motion under
$\mathbb{\bar{E}}$, and the proof for (iv) is similar. By Step 2 in the
definition of $G$-expectation in Section 2, we only need to show that, for any
given $X=\varphi(B_{t+s}-B_{s})$, where $\varphi\in C_{b.Lip}(\mathbb{R}^{d}%
)$, we have
\begin{equation}
\mathbb{\bar{E}}[X]=\mathbb{\hat{E}}{[X]}. \label{Myeq3.12}%
\end{equation}
From Step 1 in the definition of $G$-expectation, we know that
\[
\mathbb{\bar{E}}[X]=\bar{u}(t,0,0).
\]
Here $\bar{u}(r,x_{1},x_{2})\in C([0,T]\times\mathbb{R}^{d}\times
\mathbb{R}^{d})$ is the solution of the following $\bar{G}$-heat equation:%
\begin{equation}
\partial_{t}\bar{u}-\bar{G}(D_{xx}^{2}\bar{u})=0,\ \bar{u}(0,x_{1}%
,x_{2})=\varphi(x_{1}),\ \ \ \text{where}\ x=(x_{1},x_{2}). \label{Myeq3.10}%
\end{equation}
Similarly,
\[
\mathbb{\hat{E}}[X]={u}(t,0),
\]
where ${u}(r,x_{1})\in C([0,T]\times\mathbb{R}^{d})$ is the solution of the
following ${G}$-heat equation:%
\[
\partial_{t}{u}-{G}(D_{x_{1}x_{1}}^{2}{u})=0,\ u(0,x_{1})=\varphi(x_{1}).
\]
It is easy to check that ${u}(r,x_{1})$ is also a solution of (\ref{Myeq3.10}%
). Then, from the uniqueness theorem of viscosity solutions, we get
\[
{u}(r,x_{1})=\bar{u}(r,x_{1},x_{2}),\ \text{for}\ (r,x_{1},x_{2})\in
\lbrack0,T]\times\mathbb{R}^{d}\times\mathbb{R}^{d},
\]
which implies the desired (\ref{Myeq3.12}).
\end{proof}

For each fixed $\varepsilon\in(0,1)$, we define $B_{t}^{\varepsilon}%
=B_{t}+\varepsilon W_{t}$. Following Proposition 1.4 in Chapter III of
\cite{Peng 1}, we deduce that $(B_{t}^{\varepsilon})_{t\geq0}$ is a
$d$-dimensional $G_{\varepsilon}$-Brownian motion under $\mathbb{\bar{E}}$,
where
\[
G_{\varepsilon}(A)=\mathbb{\bar{E}}[\langle AB_{1}^{\varepsilon}%
,B_{1}^{\varepsilon}\rangle]=\mathbb{\bar{E}}\left[  \left\langle \left[
\begin{array}
[c]{cc}%
A & \varepsilon A\\
\varepsilon A & \varepsilon^{2}A
\end{array}
\right]  \bar{B}_{1},\bar{B}_{1}\right\rangle \right]  =\bar{G}\left(  \left[
\begin{array}
[c]{cc}%
A & \varepsilon A\\
\varepsilon A & \varepsilon^{2}A
\end{array}
\right]  \right)  =G(A)+\frac{\varepsilon^{2}}{2}\text{tr}[A],\ \text{for}%
\ A\in\mathbb{S}(d).
\]
We claim that the $G_{\varepsilon}$ is non-degenerate. Indeed, for $A\geq B,$
we have
\[
G_{\varepsilon}(A)-G_{\varepsilon}(B)=G(A)-G(B)+\frac{\varepsilon^{2}}%
{2}\text{tr}[A-B]\geq\frac{\varepsilon^{2}}{2}\text{tr}[A-B].
\]

The following two lemmas concern respectively the quadratic variation and the
stochastic exponential of ${B}^{\varepsilon}$.

\begin{lemma}
\label{Myle3.1}We have
\begin{equation}
\langle{B}^{\varepsilon}\rangle_{t}=\langle{B}\rangle_{t}+\varepsilon
^{2}tI_{d\times d}, \label{Myeq3.2}%
\end{equation}
where $I_{d\times d}$ is the $d\times d$ identity matrix.
\end{lemma}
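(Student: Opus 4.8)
\textbf{Proof proposal for Lemma \ref{Myle3.1}.}

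The plan is to compute the quadratic variation of $B^{\varepsilon}=B+\varepsilon W$ componentwise, using bilinearity of the mutual quadratic variation together with the fact that the $G$-Brownian motion $B$ and the standard Brownian motion $W$ live on the joint $\bar{G}$-expectation space, where $\bar{G}$ has the block form $\bar{G}(A')=G(A)+\frac12\mathrm{tr}[C]$ for $A'=\left[\begin{smallmatrix} A & B \\ B & C\end{smallmatrix}\right]$. First I would recall that for two coordinate processes, $\langle B^{i,\varepsilon},B^{j,\varepsilon}\rangle_t = \langle B^i,B^j\rangle_t + \varepsilon\langle B^i,W^j\rangle_t + \varepsilon\langle W^i,B^j\rangle_t + \varepsilon^2\langle W^i,W^j\rangle_t$, since mutual quadratic variation is symmetric and bilinear (this is a standard property of $G$-It\^o calculus, available on any sublinear expectation space supporting the relevant processes).

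The key computation is then to identify the three non-diagonal-in-$(B,W)$ terms. For the cross terms $\langle B^i,W^j\rangle$, the point is that in the block matrix defining $\bar{G}$, the off-diagonal block $B$ appears nowhere — $\bar{G}$ depends only on $A$ (through $G$) and on $\mathrm{tr}[C]$. Hence the characterization of $\langle\bar{B}\rangle$ for the $\bar{G}$-Brownian motion $\bar{B}=(B,W)$ forces the $B$–$W$ cross quadratic variations to vanish, $\langle B^i,W^j\rangle_t=0$ q.s.; concretely one can invoke that $\langle\bar B\rangle_t$ takes values in the closure of $\{\,\int_0^t\gamma_s\,ds:\gamma_s\in\Gamma_{\bar G}\,\}$ and every $\gamma\in\Gamma_{\bar G}$ has zero $(i,d+j)$ entries, or alternatively compute $\bar{\mathbb{E}}[|\langle B^i,W^j\rangle_t|]$ and $\bar{\mathbb{E}}[-|\langle B^i,W^j\rangle_t|]$ directly from $\bar G$ and see both are $0$. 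For the $W$–$W$ term, since $W$ is a standard $d$-dimensional Brownian motion under $\bar{\mathbb{E}}$ (Lemma above, part (iv)), we have $\langle W^i,W^j\rangle_t=\delta_{ij}t$, i.e. $\langle W\rangle_t = tI_{d\times d}$. Assembling, $\langle B^{i,\varepsilon},B^{j,\varepsilon}\rangle_t = \langle B^i,B^j\rangle_t + \varepsilon^2\delta_{ij}t$, which in matrix form is precisely \eqref{Myeq3.2}.

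The main obstacle — and really the only substantive point — is rigorously justifying that the $B$–$W$ mutual quadratic variation vanishes; one must argue this from the structure of $\bar G$ rather than from any (false) "independence implies zero covariation" slogan, since in the sublinear setting one should be careful. I would handle it by the $\Gamma$-representation: writing $\bar G(A')=\frac12\sup_{\gamma\in\bar\Gamma}\mathrm{tr}[\gamma A']$ as in Remark \ref{Myre2.1}, the explicit form of $\bar G$ gives $\bar\Gamma=\{\left[\begin{smallmatrix}\gamma & 0\\ 0 & I_{d\times d}\end{smallmatrix}\right]:\gamma\in\Gamma\}$, and then the standard description of the quadratic variation process of a $G$-Brownian motion (Peng \cite{Peng 1}, Chapter III) yields that $\langle\bar B\rangle_t$ almost surely has block-diagonal increments with upper-left block $\langle B\rangle_t$ and lower-right block $tI_{d\times d}$ and vanishing off-diagonal block, which is exactly the three identities needed. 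Everything else is routine bilinear bookkeeping.
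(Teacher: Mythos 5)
Your proposal is correct and follows essentially the same route as the paper: both identify the $\Gamma$-representation of $\bar G$ as block-diagonal, invoke the characterization of the quadratic variation of a $G$-Brownian motion (Corollary 5.7 in Chapter III of \cite{Peng 1}) to conclude $\langle B,W\rangle_t=0$ and $\langle W\rangle_t=tI_{d\times d}$, and then expand $\langle B^{\varepsilon}\rangle_t$ bilinearly. The extra componentwise bookkeeping and the alternative argument via $\bar{\mathbb{E}}[\pm|\langle B^i,W^j\rangle_t|]$ are harmless variants of the same idea.
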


\begin{proof}
We can find a set $\Gamma\subset\mathbb{S}_{+}(d)$ such that%
\begin{equation}
G(A)=\frac{1}{2}\sup_{\gamma\in\Gamma}\text{\textrm{tr}}[\gamma A],\ \ \text{
for }A\in\mathbb{S}(d).
\end{equation}
Then it is easy to check that
\[
\bar{G}(A^{\prime})=\frac{1}{2}\sup_{\gamma\in\Gamma}\text{\textrm{tr}}\left[
A^{\prime}\left[
\begin{array}
[c]{cc}%
\gamma & 0\\
0 & I_{d\times d}%
\end{array}
\right]  \right]  ,\ \ \ \ \text{ for}\ A^{\prime}\in\mathbb{S}(2d).
\]
By Corollary 5.7 in Chapter III of \cite{Peng 1}, we have
\[
\left\langle \bar{B}\right\rangle _{t}=\left[
\begin{array}
[c]{cc}%
\langle{B}\rangle_{t} & \langle{B,W}\rangle_{t}\\
\langle{B,W}\rangle_{t} & \langle{W}\rangle_{t}%
\end{array}
\right]  \in\left\{  t\left[
\begin{array}
[c]{cc}%
\gamma & 0\\
0 & I_{d\times d}%
\end{array}
\right]  :\gamma\in\Gamma\right\}  .
\]
From this we deduce that $\langle{B,W}\rangle_{t}=0$, and thus,
\[
\langle{B}^{\varepsilon}\rangle_{t}=\langle{B}\rangle_{t}+2\varepsilon
\langle{B},W\rangle_{t}+\varepsilon^{2}\langle W\rangle_{t}=\langle{B}%
\rangle_{t}+\varepsilon^{2}tI_{d\times d}.
\]
This completes the proof.
\end{proof}

\begin{lemma}
\label{Myle3.6} Under the assumptions of Theorem \ref{Gt1}, the $G$-Novikov's
condition holds for ${B}^{\varepsilon}$: For any given $0<\delta^{\prime
}<\delta$, there exists some $\varepsilon_{\delta^{\prime}}>0$ such that for
each $0<\varepsilon\leq\varepsilon_{\delta^{\prime}},$
\begin{equation}
\mathbb{\bar{E}}\left[  \exp\left(  \frac{1}{2}(1+\delta^{\prime})\int_{0}%
^{T}\langle h_{t}h_{t}^{T},d\langle{B}^{\varepsilon}\rangle_{t}\rangle\right)
\right]  <\infty.
\end{equation}

\end{lemma}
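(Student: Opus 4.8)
The plan is to transfer the claim to the original $G$-expectation space by means of Lemma \ref{Myle3.1} and then to separate the two integrability hypotheses of Theorem \ref{Gt1} by a Hölder estimate, choosing the Hölder exponent close to $1$ so that there is room left in the $G$-Novikov condition, and shrinking $\varepsilon$ so that the extra $|h_t|^2$-term is absorbed into the second hypothesis.

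First I would use Lemma \ref{Myle3.1}. Since $\langle B^{\varepsilon}\rangle_t = \langle B\rangle_t + \varepsilon^2 t I_{d\times d}$, we have $d\langle B^{\varepsilon}\rangle_t = d\langle B\rangle_t + \varepsilon^2 I_{d\times d}\,dt$, and because $\langle h_t h_t^T, I_{d\times d}\rangle = \text{tr}[h_t h_t^T] = |h_t|^2$,
\[
\int_0^T \langle h_t h_t^T, d\langle B^{\varepsilon}\rangle_t\rangle = \int_0^T \langle h_t h_t^T, d\langle B\rangle_t\rangle + \varepsilon^2 \int_0^T |h_t|^2\,dt .
\]
Exponentiating, the quantity inside $\bar{\mathbb{E}}[\cdot]$ factors as the product of $\exp(\tfrac12(1+\delta')\int_0^T \langle h_t h_t^T, d\langle B\rangle_t\rangle)$ and $\exp(\tfrac{\varepsilon^2}{2}(1+\delta')\int_0^T |h_t|^2\,dt)$.

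Next, since $\delta' < \delta$, I would fix $p>1$ small enough that $p(1+\delta') \le 1+\delta$ and let $q$ be its Hölder conjugate. Hölder's inequality for the sublinear expectation $\bar{\mathbb{E}}$ then bounds the quantity in question by $I_1^{1/p} I_2^{1/q}$, where
\[
I_1 := \bar{\mathbb{E}}\Big[\exp\Big(\tfrac{p}{2}(1+\delta')\int_0^T \langle h_t h_t^T, d\langle B\rangle_t\rangle\Big)\Big], \qquad I_2 := \bar{\mathbb{E}}\Big[\exp\Big(\tfrac{q\varepsilon^2}{2}(1+\delta')\int_0^T |h_t|^2\,dt\Big)\Big].
\]
Both integrands depend on $B$ only, so by part (iii) of the lemma preceding Lemma \ref{Myle3.1} one may replace $\bar{\mathbb{E}}$ by $\hat{\mathbb{E}}$ in $I_1$ and $I_2$. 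As $\int_0^T \langle h_t h_t^T, d\langle B\rangle_t\rangle \ge 0$ and $p(1+\delta') \le 1+\delta$, monotonicity gives $I_1 \le \hat{\mathbb{E}}[\exp(\tfrac12(1+\delta)\int_0^T \langle h_t h_t^T, d\langle B\rangle_t\rangle)] < \infty$ by the $G$-Novikov condition (H). For $I_2$, I would set $\varepsilon_{\delta'} := \big(2\delta_0/(q(1+\delta'))\big)^{1/2}$; then for every $0<\varepsilon\le\varepsilon_{\delta'}$ we have $\tfrac{q\varepsilon^2}{2}(1+\delta')\le\delta_0$, and since $\int_0^T |h_t|^2\,dt \ge 0$, monotonicity together with the second hypothesis of Theorem \ref{Gt1} gives $I_2 \le \hat{\mathbb{E}}[\exp(\delta_0\int_0^T |h_t|^2\,dt)] < \infty$. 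Multiplying the two bounds yields the assertion for all $0<\varepsilon\le\varepsilon_{\delta'}$.

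The computation is essentially routine once the quadratic-variation identity of Lemma \ref{Myle3.1} is available; I expect the only delicate point to be the measure-theoretic bookkeeping on the product space, namely verifying that the exponential functionals above lie in the domain where $\bar{\mathbb{E}} = \hat{\mathbb{E}}$ applies (equivalently, lie in $L^1_{\bar G}(\bar\Omega)$, or at least in $\mathcal{L}(\bar\Omega)$), which follows from the two integrability assumptions via Theorem \ref{LG-ch}. Apart from that, the argument is just the observation that the $\varepsilon^2\int_0^T|h_t|^2\,dt$ perturbation is, uniformly for small $\varepsilon$, dominated by an admissible multiple of $\int_0^T|h_t|^2\,dt$.
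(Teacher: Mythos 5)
Your proposal is correct and is essentially the paper's own argument: both decompose $\langle B^{\varepsilon}\rangle$ via Lemma \ref{Myle3.1}, apply H\"older's inequality with $p>1$ chosen so that $p(1+\delta')\le 1+\delta$, and shrink $\varepsilon$ so that the conjugate-exponent coefficient on $\int_0^T|h_t|^2\,dt$ is at most $\delta_0$. Your explicit formula for $\varepsilon_{\delta'}$ and the remark on passing from $\bar{\mathbb{E}}$ to $\hat{\mathbb{E}}$ merely make precise what the paper leaves implicit.
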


\begin{proof}
We first take $p>1$ so small such that%
\[
p(1+\delta^{\prime})\leq1+\delta.
\]
Let $p^{\prime}$ be the H\"{o}lder conjugate of $p$. Then after taking
$\varepsilon>0$ small, we have \[\frac
{\varepsilon^{2}p^{\prime}}{2}(1+\delta^{\prime})\leq\delta_{0}.\] Applying the H\"{o}lder's inequality, we obtain
\begin{align*}
&  \mathbb{\bar{E}}\left[  \exp\left(  \frac{1}{2}(1+\delta^{\prime})\int%
_{0}^{T}\langle h_{t}h_{t}^{T},d\langle{B}^{\varepsilon}\rangle_{t}%
\rangle\right)  \right]  \\
&  ={\mathbb{\hat{E}}}\left[  \exp\left(  \frac{1}{2}(1+\delta^{\prime}%
)\int_{0}^{T}\langle h_{t}h_{t}^{T},d\langle{B}\rangle_{t}\rangle\right)
\exp\left(  \frac{\varepsilon^{2}}{2}(1+\delta^{\prime})\int_{0}^{T}%
|h_{t}|^{2}dt\right)  \right]  \\
&  \leq{\mathbb{\hat{E}}}\left[  \exp\left(  \frac{p}{2}(1+\delta^{\prime
})\int_{0}^{T}\langle h_{t}h_{t}^{T},d\langle{B}\rangle_{t}\rangle\right)
\right]  ^{\frac{1}{p}}{\mathbb{\hat{E}}}\left[  \exp\left(  \frac
{\varepsilon^{2}p^{\prime}}{2}(1+\delta^{\prime})\int_{0}^{T}|h_{t}%
|^{2}dt\right)  \right]  ^{\frac{1}{p^{\prime}}}\\
&  <\infty,
\end{align*}
where in the last inequality we have applied the $G$-Novikov's condition for
$B$ and the assumption that $\mathbb{\hat{E}}\left[  \exp\left(  \int_{0}%
^{T}\delta_{0}|h_{t}|^{2}dt)\right)  \right]  <\infty.$
\end{proof}

Now we are ready to state the proof of Theorem \ref{Gt1}.

\begin{proof}
We define, for $0\leq t\leq T$,
\begin{align}
&  N_{t}^{\varepsilon}:=\exp\left(  \int_{0}^{t}\langle h_{s},dB_{s}%
^{\varepsilon}\rangle-\frac{1}{2}\int_{0}^{t}\langle h_{s}h_{s}^{T}%
,d\langle{B}^{\varepsilon}\rangle_{s}\rangle\right)  =\mathcal{E}(h)_{t}%
\exp\left(  \int_{0}^{t}\varepsilon\langle h_{s},dW_{s}\rangle-\frac{1}%
{2}\varepsilon^{2}\int_{0}^{t}|h_{s}|^{2}ds\right)  ,\label{Myeq3.3}\\
&  \tilde{B}_{t}^{\varepsilon}:=B_{t}^{\varepsilon}-\int_{0}^{t}d\langle
{B}^{\varepsilon}\rangle_{s}h_{s}=B_{t}^{\varepsilon}-\int_{0}^{t}d\langle
{B}\rangle_{s}h_{s}-\varepsilon^{2}\int_{0}^{t}h_{s}ds, \label{Myeq3.9}%
\end{align}
where we have used Lemma \ref{Myle3.1} in the second equalities in
(\ref{Myeq3.3}) and (\ref{Myeq3.9}). We also define
\[
\tilde{\mathbb{E}}^{\varepsilon}[\xi]:={\mathbb{\bar{E}}}[\xi N_{T}%
^{\varepsilon}],\ \text{for }\xi\in\ \tilde{\mathcal{H}}.
\]
Since  $(B_{t}^{\varepsilon})_{t\geq0}$ is
non-degenerate and from Lemma \ref{Myle3.6}, it satisfies the $G$-Novikov's
condition for small enough $\varepsilon>0$, then we can apply Theorem \ref{Gt0} to obtain that, for $\varphi\in
C_{b.Lip}(\mathbb{R}^{n\times d})$,
\begin{equation}
\mathbb{\bar{E}}[\varphi(B_{t_{1}}^{\varepsilon},B_{t_{2}}^{\varepsilon
},\cdots,B_{t_{n}}^{\varepsilon})]=\tilde{\mathbb{E}}^{\varepsilon}%
[\varphi(\tilde{B}_{t_{1}}^{\varepsilon},\tilde{B}_{t_{2}}^{\varepsilon
},\cdots,\tilde{B}_{t_{n}}^{\varepsilon})],\text{ for }\varepsilon>0\text{ small}. \label{Myeq3.7}%
\end{equation}
To completes the proof, we shall show that the left-hand side (right-hand side
resp.) of (\ref{Myeq3.7}) converges to the left-hand side (right-hand side
resp.) of (\ref{Myeq3.4}) by the following two steps.

\textit{Step 1. The left-hand side.} By the Lipschitz continuity assumption of
$\varphi,$ we have%
\begin{align*}
&  |\mathbb{\bar{E}}[\varphi(B_{t_{1}}^{\varepsilon},B_{t_{2}}^{\varepsilon
},\cdots,B_{t_{n}}^{\varepsilon})]-\hat{\mathbb{E}}[\varphi(B_{t_{1}}%
,B_{t_{2}},\cdots,B_{t_{n}})]|\\
&  =|\mathbb{\bar{E}}[\varphi(B_{t_{1}}^{\varepsilon},B_{t_{2}}^{\varepsilon
},\cdots,B_{t_{n}}^{\varepsilon})]-\mathbb{\bar{E}}[\varphi(B_{t_{1}}%
,B_{t_{2}},\cdots,B_{t_{n}})]|\\
&  \leq L_{\varphi}\mathbb{\bar{E}[}|B_{t_{1}}^{\varepsilon}-B_{t_{1}%
}|+|B_{t_{2}}^{\varepsilon}-B_{t_{2}}|+\cdots+|B_{t_{n}}^{\varepsilon
}-B_{t_{n}})|]\\
&  =L_{\varphi}\varepsilon\mathbb{\bar{E}[}|W_{t_{1}}|+|W_{t_{2}}%
|+\cdots+|W_{t_{n}}|]\rightarrow0,\text{ as }\varepsilon\rightarrow0,
\end{align*}
where $L_{\varphi}$ is the Lipschitz constant of $\varphi$.

\textit{Step 2. The right-hand side. }
Let $p>1$ be the constant in Remark \ref{Rm1.1} (ii). Then
 from the definition of $\tilde
{\mathbb{E}}^{\varepsilon},$ we have
\begin{align*}
&  |\tilde{\mathbb{E}}^{\varepsilon}[\varphi(\tilde{B}_{t_{1}}^{\varepsilon
},\tilde{B}_{t_{2}}^{\varepsilon},\cdots,\tilde{B}_{t_{n}}^{\varepsilon
})]-\tilde{\mathbb{E}}[\varphi(\tilde{B}_{t_{1}},\tilde{B}_{t_{2}}%
,\cdots,\tilde{B}_{t_{n}})]|\\
&  =|{\mathbb{\bar{E}}}[\varphi(\tilde{B}_{t_{1}}^{\varepsilon},\tilde
{B}_{t_{2}}^{\varepsilon},\cdots,\tilde{B}_{t_{n}}^{\varepsilon}%
)N_{T}^{\varepsilon}]-\hat{\mathbb{E}}[\varphi(\tilde{B}_{t_{1}},\tilde
{B}_{t_{2}},\cdots,\tilde{B}_{t_{n}})\mathcal{E}(h)_{T}]|\\
&  \leq{\mathbb{\bar{E}}}[|\varphi(\tilde{B}_{t_{1}}^{\varepsilon},\tilde
{B}_{t_{2}}^{\varepsilon},\cdots,\tilde{B}_{t_{n}}^{\varepsilon}%
)N_{T}^{\varepsilon}-\varphi(\tilde{B}_{t_{1}}^{\varepsilon},\tilde{B}_{t_{2}%
}^{\varepsilon},\cdots,\tilde{B}_{t_{n}}^{\varepsilon})\mathcal{E}(h)_{T}|]\\
&  +{\mathbb{\bar{E}}}[|\varphi(\tilde{B}_{t_{1}}^{\varepsilon},\tilde
{B}_{t_{2}}^{\varepsilon},\cdots,\tilde{B}_{t_{n}}^{\varepsilon}%
)\mathcal{E}(h)_{T}-\varphi(\tilde{B}_{t_{1}},\tilde{B}_{t_{2}},\cdots
,\tilde{B}_{t_{n}})\mathcal{E}(h)_{T}|]\\
&  \leq C_{\varphi}{\mathbb{\bar{E}}}[|N_{T}^{\varepsilon}-\mathcal{E}%
(h)_{T}|]\\
&  +{\mathbb{\bar{E}}}[|\varphi(\tilde{B}_{t_{1}}^{\varepsilon},\tilde
{B}_{t_{2}}^{\varepsilon},\cdots,\tilde{B}_{t_{n}}^{\varepsilon}%
)-\varphi(\tilde{B}_{t_{1}},\tilde{B}_{t_{2}},\cdots,\tilde{B}_{t_{n}%
})|^{p^{\prime}}]^{\frac{1}{p^{\prime}}}\hat{\mathbb{E}}{\mathbb{[}%
}|\mathcal{E}(h)_{T}|^{p}]^{\frac{1}{p}}\\
&  =:I_{1}+I_{2},
\end{align*}
where $C_{\varphi}$ is the bound of $\varphi$ and $p^{\prime}$ is the
H\"{o}lder conjugate of $p.$

Now we show that $I_{1},I_{2}\rightarrow0,$ as $\varepsilon\rightarrow0.$ The
proof of $I_{2}\rightarrow0$\ is similar to that of the left-hand side in Step
1, so we omit it, and we only need to consider the $I_{1}$ term. By
H\"{o}lder's inequality, we get
\begin{equation}%
\begin{split}
{\mathbb{\bar{E}}}[|N_{T}^{\varepsilon}-\mathcal{E}(h)_{T}|]  &
={\mathbb{\bar{E}}}\left[  \mathcal{E}(h)_{T}\left\vert \exp\left(  \int%
_{0}^{T}\varepsilon\langle h_{s},dW_{s}\rangle-\frac{1}{2}\varepsilon^{2}%
\int_{0}^{T}|h_{s}|^{2}ds\right)  -1\right\vert \right] \\
&  \leq{\mathbb{\bar{E}}}[|\mathcal{E}(h)_{T}|^{p}]^{\frac{1}{p}}%
{\mathbb{\bar{E}}}\left[  \left\vert \exp\left(  \int_{0}^{t}\varepsilon
\langle h_{s},dW_{s}\rangle-\frac{1}{2}\varepsilon^{2}\int_{0}^{t}|h_{s}%
|^{2}ds\right)  -1\right\vert ^{p^{\prime}}\right]  ^{\frac{1}{p^{\prime}}}%
\end{split}
\label{Myeq3.8}%
\end{equation}

Let any $r\geq0$ be fixed. From the assumption that ${\mathbb{\bar{E}}}\left[
\exp\left(  \int_{0}^{T}\delta_{0}|h_{s}|^{2}ds)\right)  \right]
=\mathbb{\hat{E}}\left[  \exp\left(  \int_{0}^{T}\delta_{0}|h_{s}%
|^{2}ds)\right)  \right]  <\infty$ for some $\delta_{0}>0$ and Proposition
\ref{Myth3.1}, we have
\begin{align*}
&  {\mathbb{\bar{E}}}\left[  \pm\left(  \exp\left(  \int_{0}^{t}%
\varepsilon\langle h_{s},dW_{s}\rangle-\frac{1}{2}\varepsilon^{2}\int_{0}%
^{t}|h_{s}|^{2}ds\right)  \right)  ^{r}\right] \\
&  ={\mathbb{\bar{E}}}\left[  \pm\exp\left(  r\varepsilon\int_{0}^{t}\langle
h_{s},dW_{s}\rangle-\frac{1}{2}r\varepsilon^{2}\int_{0}^{t}|h_{s}%
|^{2}ds\right)  \right]  \rightarrow\pm1,\text{ as }\varepsilon\downarrow0.
\end{align*}
Then applying the binomial theorem, we get
\begin{align*}
&  {\mathbb{\bar{E}}}\left[  \left\vert \exp\left(  \int_{0}^{t}%
\varepsilon\langle h_{s},dW_{s}\rangle-\frac{1}{2}\varepsilon^{2}\int_{0}%
^{t}|h_{s}|^{2}ds\right)  -1\right\vert ^{p^{\prime}}\right]  ^{\frac
{1}{p^{\prime}}}\\
&  \leq{\mathbb{\bar{E}}}\left[  \left(  \exp\left(  \int_{0}^{t}%
\varepsilon\langle h_{s},dW_{s}\rangle-\frac{1}{2}\varepsilon^{2}\int_{0}%
^{t}|h_{s}|^{2}ds\right)  -1\right)  ^{N}\right]  ^{\frac{1}{N}}\\
&  \leq\left\{  \sum_{k=0}^{N}C_{k}^{N}{\mathbb{\bar{E}}}\left[  \left(
\exp\left(  \int_{0}^{t}\varepsilon\langle h_{s},dW_{s}\rangle-\frac{1}%
{2}\varepsilon^{2}\int_{0}^{t}|h_{s}|^{2}ds\right)  \right)  ^{N-k}%
(-1)^{k}\right]  \right\}  ^{\frac{1}{N}}\\
&  \rightarrow\left\{  \sum_{k=0}^{N}C_{k}^{N}(-1)^{k}\right\}  ^{\frac{1}{N}%
}\\
&  =\left\{  (1-1)^{N}\right\}  ^{\frac{1}{N}}\\
&  =0,\text{ as }\varepsilon\downarrow0,
\end{align*}
where $N$\ is an even number not smaller than $p^{\prime}.$ Therefore,
combining this with (\ref{Myeq3.8}), we obtain
\[
{\mathbb{\bar{E}}}[|N_{T}^{\varepsilon}-\mathcal{E}(h)_{T}|]\rightarrow
0,\text{ as }\varepsilon\downarrow0,
\]
which implies
\[
I_{1}\rightarrow0,\text{ as }\varepsilon\downarrow0,
\]
as desired.
\end{proof}

\bigskip

\noindent\textbf{Acknowledgement}: The author would like to thank the
anonymous referee for the careful reading and valuable comments which improved
the presentation of this manuscript.

\end{document}